\newtheorem{theorem}{Theorem}
\newtheorem{lemma}{Lemma}
\newtheorem{corollary}{Corollary}
\theoremstyle{definition}
\begin{document}

\title[Direct and inverse approximation theorems]{Direct and inverse approximation theorems of functions
in the Musielak-Orlicz type spaces}


\author{Fahreddin Abdullayev, Stanislav Chaichenko and Andrii Shidlich}

\address{Fahreddin Abdullayev, \\
                   Faculty of Sciences,\\ 
                   Kyrgyz-Turkish Manas University,\\
                   56, Chyngyz Aitmatov avenue, Bishkek, Kyrgyz republic, 720044;\\
                   Faculty of Science and Letters,\\
                   Mersin University,\\
                   \c{C}iftlikk\"{o}y Kamp\"{u}s\"{u}, Yeni\c{s}ehir, Mersin, Turkey, 33342,\\
\email{fahreddin.abdullayev@manas.edu.kg, fahreddinabdullayev@gmail.com}}

\address{Stanislav Chaichenko,\\
                   Donbas State Pedagogical University,\\
                   19, G.~Batyuka st., Slaviansk, Donetsk region, Ukraine, 84116,\\
\email{s.chaichenko@gmail.com}}

\address{Andrii Shidlich,\\
                   Department of Theory of Functions,\\
                   Institute of Mathematics of NAS of Ukraine,\\
                   3, Tereshchenkivska str., Kyiv, Ukraine, 01601, \\
\email{shidlich@gmail.com}}

\CorrespondingAuthor{Fahreddin Abdullayev}


\date{07.04.2020}                               

\keywords{direct  approximation theorem;  inverse approximation theorem;
generalized  modulus of smoothness; best approximation; Musilak-Orlicz type spaces }

\subjclass{41A27;  42A16; 41A44}

\thanks{This work was supported in part
 by the Kyrgyz-Turkish Manas University (Bishkek / Kyrgyz Republic), project No.~KTM\"{U}-BAP-2019.FBE.02,
 the Ministry of Education and Science of Ukraine in the framework of the fundamental research No.~0118U003390 
 and by the European Union's Horizon 2020 research and innovation program under the Marie Sk{\l}odowska-Curie grant agreement No 873071}

\begin{abstract}
     In Musilak-Orlicz type spaces ${\mathcal S}_{\bf M}$, direct and inverse approximation theorems are obtained in
 terms of the best approximations of functions and generalized moduli of smoothness. The  question of the exact constants in Jackson-type inequalities is studied.
\end{abstract}

\maketitle



\section{Introduction}
In  Musilak-Orlicz type  spaces ${\mathcal S}_{\bf M}$, we prove direct and inverse approximation theorems 
in terms of the best approximations of functions and generalized moduli of smoothness. Such theorems establish a connection between the smoothness properties of functions and the behavior of the error of their approximation by various methods.
 In particular, direct theorems show that good smoothness properties of a function
 (the existence of derivatives of a given order, the specific behavior of
 the modulus of smoothness, etc.) imply a good estimate of the error of its  approximation.
  In the case of best approximation by polynomials, these results
are also known as Jackson-type theorems or Jackson-type inequalities \cite{Jackson_1911}.
Inverse theorems characterize smoothness  properties of functions depending on the rapidity with which
the errors of best, or any other, approximations tend to zero. The problem of obtaining inverse theorems in the
approximation of functions was first stated, and in some cases solved, by Bernstein \cite{Bernstein_1912}.
In ideal cases, the direct and inverse theorems complement each other, and this allows us to fully characterize
a functional class having certain smoothness properties, using, for example, sequences of best approximations.
The results concerning direct and inverse connection between the smoothness properties of functions and the errors of their approximations in classical functional spaces (such as Lebesgue and Hilbert spaces, the spaces of continues functions, etc) are   described quite fully in the monographs  \cite{A_Timan_M1960}, \cite{Butzer_Nessel_M1971}, \cite{DeVore_Lorentz_M1993}, \cite{Dzyadyk_Shevchuk_M2008}, \cite{M_Timan_M2009} and others.

In 2001, Stepanets \cite{Stepanets_2001} considered the spaces  ${\mathcal S}^p={\mathcal S}^p({\mathbb T})$ of $2\pi$-periodic  Lebe\-sgue summable functions $f$ ($f\in L$) with the finite norm
 \begin{equation}\label{norm_Sp}
    \|f\|_{_{\scriptstyle {p}}}:=
    \|f\|_{_{\scriptstyle {\mathcal S}^p}}=\|\{\widehat f({k})\}_{{k}\in\mathbb  Z}
    \|_{l_p({\mathbb Z})}:=\Big(\sum_{{ k}\in\mathbb  Z}|\widehat f({k})|^p\Big)^{1/p},
\end{equation}
where
$\widehat{f}(k):={[f]}\widehat{\ \ }(k)={(2\pi)^{-1}}\int_0^{2\pi}f(x) \mathrm{e}^{- \mathrm{i}kx}\mathrm{d}x$,
$k\in\mathbb Z$, are the Fourier coefficients of the function $f$, and  investigated some approximation characteristics of these spaces. Stepanets and Serdyuk \cite{Stepanets_Serdyuk_2002} introduced the notion of $k$th modulus of smoothness in ${\mathcal S}^p$ and proved direct and inverse theorems on approximation in terms of these moduli of
smoothness  and the best approximations of  functions.  Also this topic was
investigated actively in \cite{Sterlin_1972}, \cite{Vakarchuk_2004}, \cite{Vakarchuk 2005}, \cite[Ch.~11]{Stepanets_M2005}, \cite[Ch.~3]{M_Timan_M2009}, etc.

In  \cite{Chaichenko_Shidlich_Abdullayev_2019} and \cite{Abdullayev_Chaichenko_Imash kyzy_Shidlich_2020}, some results for the spaces ${\mathcal S}^p$ were extended  to the Orlicz type  spaces  ${\mathcal S}_{M}$ and ${\mathcal S}_{_{\scriptstyle  \mathbf p,\,\mu}}$. In particular, in \cite{Chaichenko_Shidlich_Abdullayev_2019} and \cite{Abdullayev_Chaichenko_Imash kyzy_Shidlich_2020}, direct and inverse approximation theorems were proved   in terms of best approximations of functions and
moduli of smoothness of fractional order and a connection   was established between $K$-functional and such moduli of smoothness.
In other Banach spaces, in particular, in   Banach spaces of Orlicz type, topics related to  direct and inverse approximation theorems, were  investigated  in  \cite{Guven-Israf-JMI-2010}, \cite{Akgun-Kokilash-GergMJ-2011}, \cite{Jafarov-MIA-2012}, \cite{Jafarov-JMI-2013}, \cite{Sharapudinov-AzGM-2014}, \cite{Akgun-Yildirir-MIA-2015} and others.

Here, 
 we continue such studies
and consider the Musilak-Orlicz type spaces ${\mathcal S}_{\bf M}$, which are natural generalizations of the
spaces ${\mathcal S}_{M}$ and ${\mathcal S}_{_{\scriptstyle  \mathbf p,\,\mu}}$. In these spaces, we give
direct and inverse approximation theorems in terms of best approximations of functions and
generalized moduli of smoothness. Particular attention is paid to the study of the accuracy of constants in Jackson-type inequalities.


\section{Preliminaries }

Let ${\bf M}=\{M_k(u)\}_{k\in {\mathbb Z}}$, $u\ge 0$, be a sequence of Orlicz functions. In other words, for every $k\in {\mathbb Z}$, the
function $M_k(u)$ is a nondecreasing convex function for which $M_k(0)=0$ and $M_k(u)\to \infty$ as $u\to \infty$.
The modular space (or  Musilak-Orlicz  space) ${\mathcal S}_{\bf M}$  is the space of all functions $f\in L$ such that
the following quantity (which is also called the Luxemburg norm of $f$) is finite:
\begin{equation}\label{def_Lux_norm}
    \|{f}\|_{_{\scriptstyle  {\bf M}}}:=
    \|\{\widehat{f}(k)\}_{k\in {\mathbb Z}}\|_{_{\scriptstyle l_{\bf M}({\mathbb Z})}}:=
    \inf\bigg\{a>0:\  \sum\limits_{k\in\mathbb Z}  M_k(|{\widehat{f}(k)}|/{a})\le 1\bigg\}.
\end{equation}
By definition, we say that the functions $f\in L$ and $g\in L$ are assumed to be equivalent in the space $
{\mathcal S}_{\bf M}$, when $\|f-g\|_{_{\scriptstyle  {\bf M}}}\!=\!0.$

 The spaces ${\mathcal S}_{\bf M}$ defined in this way are Banach spaces.
  Sequence spaces of this type have been studied by mathematicians since the 1940s (see, for example, the monographs
  \cite
  {Lindenstrauss-1977}, \cite{Musielak-1983}). If all functions $M_k$ are identical (namely, $M_k(u)\equiv M(u)$,
 $k\in {\mathbb Z}$), the spaces ${\mathcal S}_{\bf M}$ coincide with the ordinary Orlicz type spaces ${\mathcal S}_{M}$
 \cite{Chaichenko_Shidlich_Abdullayev_2019}.
If $M_k(u)=\mu_k u^{p_k}$,  $p_k\ge 1$, $\mu_k\ge 0 $, then ${\mathcal S}_{\bf M}$ coincide with the weighted spaces
${\mathcal S}_{_{\scriptstyle  \mathbf p,\,\mu}}$ with variable exponents \cite{Abdullayev_Chaichenko_Imash kyzy_Shidlich_2020}.
If all $M_k(u)=u^p$, $p\ge 1$, then the spaces ${\mathcal S}_{\bf M}$ are the above-defined spaces ${\mathcal S}^p$.


In addition  to the Luxembourg norm (\ref{def_Lux_norm}), in the space $
{\mathcal S}_{\bf M}$, consider  the
Orlicz norm that is defined as follows. Let
 ${\bf \tilde{M}}=\{\tilde{M}_k(v)\}_{k\in {\mathbb Z}}$ be the sequence of  functions defined by the relations
 \[
    \tilde{M}_k(v):=\sup\{uv-M_k(u): ~u\ge 0\}, \quad k \in \mathbb{Z}.
 \]
Consider the set $\Lambda{=}\Lambda({\bf \tilde{M}})$ of 
sequences of positive numbers $\lambda=\{\lambda_k\}_{k\in \mathbb{Z}}$
such that  $\sum_{k\in \mathbb{Z}}\tilde{M}_k(\lambda_k){\le} 1$. For any function  $f\in {\mathcal S}_{\bf M}$, define its Orlicz norm by the equality
\begin{equation} \label{def-Orlicz-norm}
    \|f\|^\ast_{_{\scriptstyle  {\bf M}}}:=\|\{\widehat{f}(k)\}_{k\in {\mathbb Z}}\|_{_{\scriptstyle l_{\bf M}^*({\mathbb Z})}}:= \sup \Big\{ \sum\limits_{k \in \mathbb{Z}}
    \lambda_k|\widehat{f}(k) |: \quad  \lambda\in \Lambda\Big\}.
\end{equation}

The  following  auxiliary Lemma \ref{Lemma_3} establishes the equivalence of the Luxembourg norm (\ref{def_Lux_norm}) and the
Orlicz norm (\ref{def-Orlicz-norm}).

\begin{lemma}
      \label{Lemma_3}
      For any function $f \in {\mathcal S}_{\bf M}$, the following relation holds:
      \begin{equation} \label{estim-for-norms}
      \| f\|_{_{\scriptstyle  {\bf M}}} \le \| f\|^\ast_{_{\scriptstyle  {\bf M}}}\le 2 \,\| f\|_{_{\scriptstyle  {\bf M}}}.
      \end{equation}
\end{lemma}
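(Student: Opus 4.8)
The plan is to prove the two inequalities in \eqref{estim-for-norms} separately, with the whole argument resting on the Young-type inequality $uv \le M_k(u) + \tilde M_k(v)$ for $u,v\ge 0$, which is immediate from the definition of $\tilde M_k$ as a Legendre conjugate, together with the fact that equality holds precisely when $v$ is a subgradient of $M_k$ at $u$.

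For the upper bound $\|f\|^{\ast}_{\bf M}\le 2\|f\|_{\bf M}$, I would set $a=\|f\|_{\bf M}$ (assuming $a>0$, the case $a=0$ being trivial) and use that $\sum_{k} M_k(|\widehat f(k)|/a)\le 1$, which follows from the definition of the Luxemburg norm by a monotone passage to the limit $a_n\downarrow a$. Then for every admissible $\lambda\in\Lambda$, applying Young's inequality termwise with $u=|\widehat f(k)|/a$ and $v=\lambda_k$ and summing gives $a^{-1}\sum_{k}\lambda_k|\widehat f(k)|\le \sum_{k} M_k(|\widehat f(k)|/a)+\sum_{k}\tilde M_k(\lambda_k)\le 1+1=2$. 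Taking the supremum over $\lambda\in\Lambda$ yields the bound. This direction is routine.

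The lower bound $\|f\|_{\bf M}\le\|f\|^{\ast}_{\bf M}$ is what I expect to be the main obstacle. By homogeneity of both norms I would normalize to $\|f\|^{\ast}_{\bf M}=1$ and aim to show $\sum_{k} M_k(|\widehat f(k)|)\le 1$, which by definition of the Luxemburg norm is exactly $\|f\|_{\bf M}\le 1$. Writing $w_k=|\widehat f(k)|$, I would choose for each $k$ a subgradient $\lambda_k\in\partial M_k(w_k)$ (taking $\lambda_k=0$ when $w_k=0$), so that Young's inequality becomes the equality $\lambda_k w_k=M_k(w_k)+\tilde M_k(\lambda_k)$, whence $M_k(w_k)=\lambda_k w_k-\tilde M_k(\lambda_k)$. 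Existence of these subgradients is guaranteed by the finiteness and convexity of each $M_k$.

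The hard part will be that the candidate multiplier $\lambda=\{\lambda_k\}$ need not satisfy the admissibility constraint $\sum_{k}\tilde M_k(\lambda_k)\le 1$ defining $\Lambda$, so one cannot simply invoke $\sum_{k}\lambda_k w_k\le\|f\|^{\ast}_{\bf M}$. I would resolve this by working with the truncations $|k|\le N$ and rescaling: put $\tau_N=\sum_{|k|\le N}\tilde M_k(\lambda_k)$. If $\tau_N\le 1$, the truncated multiplier lies in $\Lambda$, so $\sum_{|k|\le N}\lambda_k w_k\le 1$ and hence $\sum_{|k|\le N}M_k(w_k)\le\sum_{|k|\le N}\lambda_k w_k\le 1$. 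If $\tau_N>1$, convexity of $\tilde M_k$ together with $\tilde M_k(0)=0$ gives $\tilde M_k(\lambda_k/\tau_N)\le\tau_N^{-1}\tilde M_k(\lambda_k)$, so $\{\lambda_k/\tau_N\}\in\Lambda$, which forces $\sum_{|k|\le N}\lambda_k w_k\le\tau_N$ and therefore $\sum_{|k|\le N}M_k(w_k)=\sum_{|k|\le N}\lambda_k w_k-\tau_N\le 0$. In either case $\sum_{|k|\le N}M_k(w_k)\le 1$, and letting $N\to\infty$ gives $\sum_{k}M_k(w_k)\le 1$, i.e. $\|f\|_{\bf M}\le 1=\|f\|^{\ast}_{\bf M}$. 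The delicate steps are this rescaling, which exploits $\tilde M_k(0)=0$ to restore admissibility, and the truncation, which sidesteps any convergence issues in the infinite sums.
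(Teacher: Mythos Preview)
Your argument is correct and complete. The paper, however, does not supply its own proof of this lemma: it simply remarks that relation \eqref{estim-for-norms} follows from the analogous inequality for the Luxemburg and Orlicz norms in modular Orlicz sequence spaces and refers the reader to Lindenstrauss--Tzafriri, \emph{Classical Banach Spaces I}, Ch.~4. What you have written is precisely the classical proof that underlies that citation: Young's inequality $uv\le M_k(u)+\tilde M_k(v)$ gives the upper bound $\|f\|^{\ast}_{\bf M}\le 2\|f\|_{\bf M}$ directly, while for the lower bound one chooses subgradients $\lambda_k\in\partial M_k(|\widehat f(k)|)$ to saturate Young's inequality and then rescales to land in $\Lambda$. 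Your truncation-and-rescaling step handling the case $\tau_N>1$ is the standard way to make the argument rigorous, so there is nothing to correct; you have simply supplied the details the paper omits. One cosmetic point: the paper defines $\Lambda$ using strictly positive sequences, whereas your truncated multipliers vanish for $|k|>N$; this is harmless since a small positive perturbation (using $\tilde M_k(0)=0$ and continuity of $\tilde M_k$) recovers a genuine element of $\Lambda$ with an arbitrarily small loss.
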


Relation (\ref{estim-for-norms}) follows from the similarly relation for corresponding norms in the modular Orlicz sequence  spaces
(see, for example \cite[Ch. 4
]{Lindenstrauss-1977}).

Further,   denote by  $\|\cdot\|$  one of the norms $ \| \cdot\|_{_{\scriptstyle  {\bf M}}}$ or
$\| \cdot\|^\ast_{_{\scriptstyle  {\bf M}}}$.


Let   ${\mathcal T}_{n}$, $n=0,1,\ldots$, be the set of 
trigonometric polynomials
${t}_{n}(x) = \sum_{|k|\le n}  c_{k}\mathrm{e}^{\mathrm{i}kx}$ of the order $n$, where $c_{ k}$
are arbitrary complex numbers. For any $f\in {\mathcal S}_{\bf M}$, 
denote by $E_n (f)_{_{\scriptstyle  {\bf M}}}$ and $E_n (f)_{_{\scriptstyle  {\bf M}}}^*$  the best approximations of $f$ by trigonometric polynomials ${t}_{n-1}\in {\mathcal T}_{n-1}$ in the space ${\mathcal S}_{\bf M}$ with respect to the norms $ \| \cdot\|_{_{\scriptstyle  {\bf M}}}$ and $\| \cdot\|^\ast_{_{\scriptstyle  {\bf M}}}$ respectively, i.e.,
\begin{equation}\label{S_M.3}
    E_n (f)_{_{\scriptstyle  {\bf M}}}:=
    \inf\limits_{{t}_{n-1}\in {\mathcal T}_{n-1} }\|f-{t}_{n-1}\|_{_{\scriptstyle  {\bf M}}}
    \quad {\rm and} \quad    E_n (f)_{_{\scriptstyle  {\bf M}}}^*:=
    \inf\limits_{{t}_{n-1}\in {\mathcal T}_{n-1} }\|f-{t}_{n-1}\|_{_{\scriptstyle  {\bf M}}}^*.
\end{equation}


 The  following auxiliary  Lemma \ref{Lemma_Best_app}  characterizes the polynomial of the best approximation  in ${\mathcal S}_{\bf M}$.

\begin{lemma}
           \label{Lemma_Best_app}
           Assume that $f \in {\mathcal S}_{\bf M}$. Then
           \begin{equation} \label{Best_app_all}
           E_n (f):= \inf\limits_{{t}_{n-1}\in {\mathcal T}_{n-1} }\|f-{t}_{n-1}\|=\|f-{S}_{n-1}({f})\|,
           \end{equation}
           where $S_{n-1}(f)=S_{n-1}(f,\cdot)= \sum _{|k|\le n-1}\widehat{f}(k) {\mathrm{e}^{\mathrm{i}k\cdot}}$
           is the Fourier sum of the function $f$.
\end{lemma}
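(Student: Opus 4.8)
The plan is to exploit the fact that each of the norms $\|\cdot\|_{_{\scriptstyle  {\bf M}}}$ and $\|\cdot\|^\ast_{_{\scriptstyle  {\bf M}}}$ (and hence the generic norm $\|\cdot\|$) depends on $f$ only through the sequence $\{\widehat f(k)\}_{k\in\mathbb Z}$ of its Fourier coefficients, together with the monotonicity of these norms with respect to the moduli of the coordinates. First I would record the coefficients of the competitors. For an arbitrary ${t}_{n-1}(x)=\sum_{|k|\le n-1}c_k\mathrm{e}^{\mathrm{i}kx}\in\mathcal T_{n-1}$ one has $\widehat{t}_{n-1}(k)=c_k$ for $|k|\le n-1$ and $\widehat{t}_{n-1}(k)=0$ otherwise, so the Fourier coefficients of $f-{t}_{n-1}$ equal $\widehat f(k)-c_k$ for $|k|\le n-1$ and $\widehat f(k)$ for $|k|\ge n$. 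On the other hand, $f-S_{n-1}(f)$ has coefficients $0$ for $|k|\le n-1$ and $\widehat f(k)$ for $|k|\ge n$. Comparing these coordinatewise yields the crucial domination $|\widehat{(f-S_{n-1}(f))}(k)|\le|\widehat{(f-{t}_{n-1})}(k)|$ for every $k\in\mathbb Z$, with equality for $|k|\ge n$.

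The main step is the monotonicity of both norms: if $\{a_k\}_{k\in\mathbb Z}$ and $\{b_k\}_{k\in\mathbb Z}$ satisfy $|a_k|\le|b_k|$ for all $k$, then $\|\{a_k\}\|\le\|\{b_k\}\|$. For the Luxemburg norm I would argue that, since each $M_k$ is nondecreasing, $M_k(|a_k|/a)\le M_k(|b_k|/a)$ for every $a>0$; hence $\sum_k M_k(|a_k|/a)\le\sum_k M_k(|b_k|/a)$, so every $a>0$ admissible in the infimum (\ref{def_Lux_norm}) for $\{b_k\}$ is also admissible for $\{a_k\}$, which gives $\|\{a_k\}\|_{_{\scriptstyle  {\bf M}}}\le\|\{b_k\}\|_{_{\scriptstyle  {\bf M}}}$. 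For the Orlicz norm the claim is even more direct: since all $\lambda_k$ are positive, $\sum_{k}\lambda_k|a_k|\le\sum_{k}\lambda_k|b_k|\le\|\{b_k\}\|^\ast_{_{\scriptstyle  {\bf M}}}$ for every $\lambda\in\Lambda$, and taking the supremum over $\lambda\in\Lambda$ yields $\|\{a_k\}\|^\ast_{_{\scriptstyle  {\bf M}}}\le\|\{b_k\}\|^\ast_{_{\scriptstyle  {\bf M}}}$.

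Combining the two steps, the coordinatewise domination above together with monotonicity gives $\|f-S_{n-1}(f)\|\le\|f-{t}_{n-1}\|$ for every ${t}_{n-1}\in\mathcal T_{n-1}$. Since $S_{n-1}(f)$ itself belongs to $\mathcal T_{n-1}$, the infimum $E_n(f)=\inf_{{t}_{n-1}\in\mathcal T_{n-1}}\|f-{t}_{n-1}\|$ is attained at ${t}_{n-1}=S_{n-1}(f)$, which is exactly (\ref{Best_app_all}). I do not expect a serious obstacle here: the only point that must be checked with care is that the monotonicity argument applies uniformly to both norms, and this reduces to the monotonicity of each $M_k$ in the Luxemburg case and to the positivity of the weights $\lambda_k$ in the Orlicz case.
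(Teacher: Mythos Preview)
Your proposal is correct and follows essentially the same route as the paper: compute the Fourier coefficients of $f-t_{n-1}$, observe that choosing $c_k=\widehat f(k)$ for $|k|\le n-1$ minimizes them coordinatewise, and conclude via the definitions of the two norms. The paper compresses the monotonicity step into the phrase ``in view of (\ref{def_Lux_norm}) and (\ref{def-Orlicz-norm})'', whereas you spell it out explicitly for each norm; otherwise the arguments coincide.
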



\begin{proof}
Indeed, for any polynomial
${t}_{n-1}=\sum_{|k|\le n-1}  c_{k}\mathrm{e}^{\mathrm{i}k\cdot}\in {\mathcal T}_{n-1}$,   the quantities  $|(f-{t}_{n-1})\widehat{\ \ }(k)|=|\widehat{f}(k)-c_k|$ when $|k|\le n-1$ and $|(f-{t}_{n-1})\widehat{\ \ }(k)|=|\widehat{f}(k)|$ when  $|k|\ge n$. Therefore, in view of (\ref{def_Lux_norm}) and (\ref{def-Orlicz-norm}), the infimum in (\ref{Best_app_all}) is reached in the case when all
$c_k=\widehat{f}(k)$, i.e., when ${t}_{n-1}=S_{n-1}(f)$. $\hfill\Box$

\end{proof}

Let $\omega_\alpha(f,\delta)$ be the modulus of smoothness of a function  $f \in {\mathcal S}_{\bf M}$ of order
$\alpha>0$, i.e.,
\begin{equation}\label{usual_modulus}
    \omega_\alpha(f,\delta):=
    \sup\limits_{|h|\le \delta}\|\Delta_h^\alpha f\|=
     \sup\limits_{|h|\le \delta} \Big\|\sum\limits_{j=0}^\infty (-1)^j {\alpha \choose j} f(\cdot-jh)
     \Big\|,
\end{equation}
where ${\alpha \choose j}=\frac {\alpha(\alpha-1)\cdot\ldots\cdot(\alpha-j+1)}{j!}$ for $j \in \mathbb{N}$ and ${\alpha \choose j}=1$ for $j=0$. By the definition, for any ${k}\in {\mathbb Z}$, we have
\begin{equation}\label{modulus_difference_Fourier_Coeff}
|{[\Delta_h^\alpha f]}\widehat {\ \ }(k)|=|1-\mathrm{e}^{-\mathrm{i}kh}|^\alpha |\widehat{f}(k)|
 =2^\alpha
\Big|\sin \frac{kh}2\Big|^\alpha |\widehat{f}(k)|.
\end{equation}
Now consider the set $\Phi$ of all continuous bounded nonnegative 
pair functions $\varphi$ such that
$\varphi(0)=0$ and the Lebesgue measure of the set $\{t\in {\mathbb R}:\,\varphi(t)=0\}$ is equal to zero. For a fixed function $\varphi\in \Phi$, $h\in {\mathbb R}$ and for any $f \in {\mathcal S}_{\bf M}$, we denote by
$\{{[\Delta_h^\varphi f]}\widehat {\ \ }(k)\}_{k\in {\mathbb Z}}$ the sequence of numbers such that  for any $k\in {\mathbb Z}$,
  \begin{equation}\label{modulus_generalize difference_Fourier_Coeff}
  {[\Delta_h^\varphi f]}\widehat {\ \ }(k)  =\varphi(kh)  \widehat{f}(k).
\end{equation}
If there exists a function $\Delta_h^\varphi f\in L$ whose Fourier coefficients coincide with the numbers
${[\Delta_h^\varphi f]}\widehat {\ \ }(k)$, $k\in {\mathbb Z}$, then, as above,
the expressions $ \| \Delta_h^\varphi f\|_{_{\scriptstyle  {\bf M}}}$
 and  $\|\Delta_h^\varphi f\|^\ast_{_{\scriptstyle  {\bf M}}}$ denote   Luxemburg and Orlicz norms of the function
 $\Delta_h^\varphi f$.
 If such a function does not exist, then we also keep
 the  notation   $ \| \Delta_h^\varphi f\|_{_{\scriptstyle  {\bf M}}}$ and
 $\|\Delta_h^\varphi f\|^\ast_{_{\scriptstyle  {\bf M}}}$.
 But in this case, by these notations
we mean the corresponding norm $\|\cdot\|_{_{\scriptstyle l_{\bf M}({\mathbb Z})}}$
or $\|\cdot\|_{_{\scriptstyle l_{\bf M}^*({\mathbb Z})}}$ of the sequence $\{{[\Delta_h^\varphi f]}\widehat {\ \ }(k)\}_{k\in {\mathbb Z}}$.  Also we denote by $ \| \Delta_h^\varphi f\|$ any of the expressions $ \| \Delta_h^\varphi f\|_{_{\scriptstyle  {\bf M}}}$ and  $\|\Delta_h^\varphi f\|^\ast_{_{\scriptstyle  {\bf M}}}$

Similarly  to \cite{Shapiro_1968}, \cite{Boman_Shapiro_1971}, \cite{Boman_1980}, \cite{Kozko_Rozhdestvenskii_2004},
define  the generalized  modulus of smoothness  $\omega_\varphi$ of a function $f \in {\mathcal S}_{\bf M}$  by the equality:
\begin{equation}\label{general_modulus}
    \omega_\varphi(f,\delta)=\sup\limits_{|h|\le \delta} \|\Delta_h^\varphi f\|.
 \end{equation}
In particular, we  set
\[
    \omega_\varphi(f,\delta)_{_{\scriptstyle  {\bf M}}}:=
    \sup\limits_{|h|\le \delta}\|\Delta_h^\varphi f\|_{_{\scriptstyle  {\bf M}}}\quad {\rm and} \quad
     \omega_\varphi(f,\delta)_{_{\scriptstyle  {\bf M}}}^*:=
    \sup\limits_{|h|\le \delta}\|\Delta_h^\varphi  f\|_{_{\scriptstyle  {\bf M}}}^*.
\]
It follows from  (\ref{modulus_difference_Fourier_Coeff}) that $\omega_\alpha(f,\delta)=\omega_\varphi(f,\delta)$  when $\varphi(t)=2^\alpha
 |\sin (t/2) |^\alpha$.



\section{Direct approximation theorems}


In this section, we  prove direct approximation theorems    in the space
${\mathcal S}_{\bf M}$ in terms of the best approximations and generalized moduli of smoothness,
and also establish Jackson type inequalities with the constants that are the  best possible in some important cases.


Let $V(\tau)$, $\tau>0$,  be a set of bounded nondecreasing functions  $v$ that differ from a constant on  $[0, \tau]$.


\begin{theorem}
      \label{Theorem_2.1}
      Assume that $ f\in {\mathcal S}_{\bf M}$. Then for any $\tau >0$, $n\in {\mathbb N}$ and $\varphi\in \Phi$
      , the following inequality holds:
      \begin{equation}\label{En<omega}
      E_n (f)_{_{\scriptstyle  {\bf M}}}^*\le C_{n,\varphi}(\tau )\, \omega_\varphi\Big(f, \frac {\tau }n\Big)
      _{_{\scriptstyle  {\bf M}}}^*,
      \end{equation}
      where
      \begin{equation}\label{C_n,varphi,p}
       C_{n,\varphi}(\tau ):= \inf\limits _{v  \in  V(\tau )} \frac {v   (\tau ) - v   (0)}{
        I_{n,\varphi}(\tau ,v   )} ,
      \end{equation}
      and
      \begin{equation}\label{I_n,varphi}
      I_{n,\varphi}(\tau ,v   ):=
      \inf\limits _{k \in {\mathbb N}:k \ge n} \int\limits _0^{\tau }\varphi\Big(\frac {k u}n\Big) dv   (u).
      \end{equation}
      In this case, there exists a function $v^*  \in  V(\tau )$ that realizes the greatest lower bound in (\ref{I_n,varphi}).
\end{theorem}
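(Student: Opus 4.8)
The plan is to establish the Jackson-type inequality~\eqref{En<omega} by working entirely with the Fourier coefficients and the Orlicz norm~\eqref{def-Orlicz-norm}. First I would use Lemma~\ref{Lemma_Best_app} to write $E_n(f)^*_{_{\scriptstyle {\bf M}}}=\|f-S_{n-1}(f)\|^*_{_{\scriptstyle {\bf M}}}=\|\{\widehat f(k)\}_{|k|\ge n}\|_{_{\scriptstyle l^*_{\bf M}}}$, so that the left-hand side depends only on the tail coefficients with $|k|\ge n$. The key mechanism is an averaging identity: for any $v\in V(\tau)$ I integrate the relation~\eqref{modulus_generalize difference_Fourier_Coeff}, namely $[\Delta^\varphi_{u/n}f]\widehat{\ }(k)=\varphi(ku/n)\widehat f(k)$, against $dv(u)$ over $[0,\tau]$. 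This expresses a weighted average of the generalized differences in terms of the multiplier $\int_0^\tau\varphi(ku/n)\,dv(u)$ acting on $\widehat f(k)$, and the definition~\eqref{I_n,varphi} of $I_{n,\varphi}(\tau,v)$ is precisely the infimum over the relevant frequencies $|k|\ge n$ of this multiplier.

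The central step is to bound $\widehat f(k)$, for $|k|\ge n$, from the averaged differences. Writing $(v(\tau)-v(0))\widehat f(k)=\int_0^\tau\widehat f(k)\,dv(u)$ and comparing with $\int_0^\tau\varphi(ku/n)\widehat f(k)\,dv(u)$, I would show that the tail sequence $\{\widehat f(k)\}_{|k|\ge n}$ is dominated, coefficientwise up to the factor $(v(\tau)-v(0))/I_{n,\varphi}(\tau,v)$, by an average of the difference sequences $\{[\Delta^\varphi_{u/n}f]\widehat{\ }(k)\}$. Passing this domination through the Orlicz norm~\eqref{def-Orlicz-norm} (which, being defined as a supremum of linear functionals over the set $\Lambda$, is monotone and satisfies the integral Minkowski / triangle inequality $\|\int\,\cdot\,dv\|^*\le\int\|\cdot\|^*\,dv$), and using that $\|\Delta^\varphi_{u/n}f\|^*_{_{\scriptstyle {\bf M}}}\le\omega_\varphi(f,\tau/n)^*_{_{\scriptstyle {\bf M}}}$ whenever $|u/n|\le\tau/n$, i.e.\ $u\le\tau$, yields
\[
E_n(f)^*_{_{\scriptstyle {\bf M}}}\le \frac{v(\tau)-v(0)}{I_{n,\varphi}(\tau,v)}\,\omega_\varphi\Big(f,\frac{\tau}{n}\Big)^*_{_{\scriptstyle {\bf M}}}.
\]
Taking the infimum over $v\in V(\tau)$ produces the constant $C_{n,\varphi}(\tau)$ from~\eqref{C_n,varphi,p} and establishes~\eqref{En<omega}.

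For the final assertion that the infimum in~\eqref{I_n,varphi} is attained by some $v^*\in V(\tau)$, I would argue by a compactness/Helly-type selection principle. The functions $\varphi(ku/n)$ are continuous in $u$ and, as $k$ ranges over integers $\ge n$, the infimum $I_{n,\varphi}(\tau,v)=\inf_{k\ge n}\int_0^\tau\varphi(ku/n)\,dv(u)$ is an infimum of a family of continuous (indeed linear) functionals of the measure $dv$. Normalizing $v(\tau)-v(0)=1$, I take a maximizing sequence $v_m$, apply Helly's selection theorem to extract a subsequence converging weakly to some nondecreasing $v^*$, and use the continuity and boundedness of $\varphi$ (guaranteed by $\varphi\in\Phi$) together with the weak convergence to pass to the limit in each integral; lower semicontinuity of the infimum then shows $v^*$ realizes the extremal value. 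The main obstacle I anticipate is the justification of the integral triangle inequality for the Orlicz norm $\|\cdot\|^*_{_{\scriptstyle {\bf M}}}$ together with the interchange of the norm supremum over $\Lambda$ with the integral against $dv$; this requires care because $\|\cdot\|^*$ is defined variationally, and one must verify that the domination of coefficient sequences is genuinely preserved under the supremum defining the Orlicz norm, rather than merely under each individual linear functional $\lambda\in\Lambda$.
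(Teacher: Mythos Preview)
Your argument for the main inequality \eqref{En<omega} is correct and is essentially the paper's proof viewed from the other side: the paper fixes a near-optimal sequence $\tilde\lambda\in\Lambda$ (an $\varepsilon$-argument), works with the single linear functional $\sum_{|k|\ge n}\tilde\lambda_k|\widehat f(k)|$, integrates the lower bound $\|\Delta_{u/n}^\varphi f\|_{\bf M}^*\ge\sum_{|k|\ge n}\tilde\lambda_k\varphi(ku/n)|\widehat f(k)|$ against $dv(u)$, and then lets $\varepsilon\to 0$. This is precisely a hands-on proof of the ``integral Minkowski / sup-interchange'' step you flag as the main obstacle; by choosing $\tilde\lambda$ first, the paper avoids ever having to justify $\|\int\cdot\,dv\|^*\le\int\|\cdot\|^*\,dv$ abstractly. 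Your coefficientwise domination $I_{n,\varphi}(\tau,v)|\widehat f(k)|\le\int_0^\tau|[\Delta_{u/n}^\varphi f]\widehat{\ \ }(k)|\,dv(u)$ for $|k|\ge n$ (using evenness of $\varphi$) followed by monotonicity of $\|\cdot\|_{\bf M}^*$ is equally valid and a bit cleaner conceptually.

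For the existence of $v^*$ you diverge more substantially: the paper does not use Helly but defers to the proof of Theorem~\ref{S^p_Theorem}, where it is obtained from a $C_{[0,\tau]}$-duality result (Proposition~A, from Korneichuk) identifying $\sup_{v}I_{n,\varphi}(\tau,v)$ (over normalized $v$) with $\inf_{w\in W_{n,\varphi,1}}\|w\|_{C_{[0,\tau]}}$ and invoking attainment of the dual extremal element. Your Helly approach is a reasonable alternative, but note two points: the infimum of continuous functionals is \emph{upper} semicontinuous (not lower), which is indeed what you need since you are \emph{maximizing} $I_{n,\varphi}(\tau,\cdot)$; and you should check that the Helly limit $v^*$ remains nonconstant on $[0,\tau]$ (i.e.\ no mass escapes), so that $v^*\in V(\tau)$. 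The duality route sidesteps both issues.
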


 \begin{proof}
  Let $ f\in {\mathcal S}_{\bf M}$, $n\in {\mathbb N}$ and $h\in {\mathbb R}$. According to (\ref{Best_app_all}) and (\ref{def-Orlicz-norm}), we have
 \begin{equation}\label{E_n^*}
      E_n (f)_{_{\scriptstyle  {\bf M}}}^*= \|f-{S}_{n-1}({f})\|_{_{\scriptstyle  {\bf M}}}^*=
    \sup \Big\{ \sum\limits_{|k| \ge n}
    \lambda_k |\widehat{f}(k) |: \   \lambda\in \Lambda\Big\},
 \end{equation}
and by the definition of supremum, for  arbitrary $\varepsilon>0$  there exists a sequence $\tilde{\lambda} \in \Lambda$,
$\tilde{\lambda}=\tilde{\lambda}(\varepsilon)$, such that the following relations holds:
\[
    \sum\limits_{|k| \ge n} \tilde{\lambda}_k |\widehat{f}(k)|+\varepsilon  \ge \sup \Big\{ \sum\limits_{|k| \ge n}
    \lambda_k |\widehat{f}(k) |: \   \lambda\in \Lambda\Big\}.
\]
In view of (\ref{def-Orlicz-norm}) and (\ref{modulus_generalize difference_Fourier_Coeff}), we have
\[
    \|\Delta _h^\varphi f\|_{_{\scriptstyle  {\bf M}}}^* \ge
    \sup \Big\{ \sum\limits_{|k|\ge n}
    \lambda_k\varphi(kh)|\widehat{f}(k) |: \   \lambda\in \Lambda\Big\}
    \ge \sum\limits_{|k|\ge n}
    \tilde{\lambda}_k \varphi(kh)|\widehat{f}(k)|=
\]
\[
   = \frac{I_{n,\varphi}(\tau ,v   )}{v (\tau)-v (0)}\sum\limits_{|k|\ge n}
    \tilde{\lambda}_k |\widehat{f}(k) |+   \sum\limits_{|k|\ge n}
    \tilde{\lambda}_k |\widehat{f}(k) | \Big(\varphi(kh)-\frac{I_{n,\varphi}(\tau ,v   )}{v (\tau)-v (0)}\Big).
\]
For  any $ u\in [0,\tau]$, we get
 \begin{equation}\label{diff_est}
    \|\Delta _{\frac un}^\varphi f\|_{_{\scriptstyle  {\bf M}}}^* \ge
     \frac{I_{n,\varphi}(\tau ,v   )}{v (\tau)-v (0)}\sum\limits_{|k|\ge n}
    \tilde{\lambda}_k |\widehat{f}(k) |
         +   \sum\limits_{|k|\ge n} \tilde{\lambda}_k|\widehat{f}(k) | \bigg(\varphi\Big(\frac {k u}n\Big)-\frac{I_{n,\varphi}(\tau ,v   )}{v (\tau)-v (0)}\bigg).
 \end{equation}
The both sides of inequality (\ref{diff_est}) are nonnegative  and, in view of the
boundedness of the function $\varphi$, the series on its right-hand side is majorized on the entire
real axis by the absolutely convergent series
  $C(\varphi)\sum_{|k|\ge n} \tilde{\lambda}_k|\widehat{f}(k) |$, where $C(\varphi):=\max_{u\in {\mathbb R}} \varphi(u)$. Then
 integrating this inequality with respect to $dv  (u)$  from  $0$  to $\tau,$ we get
 \[
    \int\limits  _0^{\tau }\|\Delta _{\frac un}^\varphi f\|_{_{\scriptstyle  {\bf M}}}^*dv  \ge
     I_{n,\varphi}(\tau ,v   )\sum\limits_{|k|\ge n} \tilde{\lambda}_k |\widehat{f}(k) |
    +   \sum\limits_{|k|\ge n}
    \tilde{\lambda}_k|\widehat{f}(k) | \bigg(\int\limits  _0^{\tau } \varphi\Big(\frac {k u}n\Big)dv - I_{n,\varphi}(\tau ,v   ) \bigg).
 \]
By virtue of the definition of $I_{n,\varphi}(\tau ,v   )$, we see that the second term on the right-hand side of
the last relation
is nonnegative. Therefore, for any function $v  \in  V(\tau )$,  we have
\[
    \int\limits  _0^{\tau }\|\Delta _{\frac un}^\varphi f\|_{_{\scriptstyle  {\bf M}}}^*dv  \ge
    I_{n,\varphi}(\tau ,v   )  \sum\limits_{|k|\ge n}
    \tilde{\lambda}_k |\widehat{f}(k) |\ge
    I_{n,\varphi}(\tau ,v   )\bigg( \sup \Big\{ \sum\limits_{|k| \ge n}
    \lambda_k |\widehat{f}(k) |: \   \lambda\in \Lambda\Big\} -\varepsilon \bigg),
\]
wherefrom due to an arbitrariness of choice of the number  $\varepsilon$, we conclude that the inequality
\[
    \int\limits  _0^{\tau }\|\Delta _{\frac un}^\varphi f\|_{_{\scriptstyle  {\bf M}}}^*dv  \ge
     I_{n,\varphi}(\tau ,v   ) E_n (f)_{_{\scriptstyle  {\bf M}}}^* 
\]
is true.  Hence,
 \[
E_n (f)_{_{\scriptstyle  {\bf M}}}^* \le \frac 1{I_{n,\varphi}(\tau ,v   )}\int\limits  _0^{\tau }\|\Delta _{\frac un}^\varphi f\|_{_{\scriptstyle  {\bf M}}}^*dv
\le \frac 1{I_{n,\varphi}(\tau ,v   )}\int\limits  _0^{\tau }\omega_\varphi \Big(f,\frac un\Big)_{_{\scriptstyle  {\bf M}}}^* dv,
\]
whence  taking into account nondecreasing of the function $\omega_\varphi$,
 we immediately obtain relation (\ref{En<omega}).
The existence of the function $v^*  \in  V(\tau )$  realizing the greatest lower bound in (\ref{I_n,varphi})
 will be given below in the proof of   Theorem \ref{S^p_Theorem}. $\hfill\Box$

 \end{proof}



\begin{corollary}
      \label{Corollary 01}
      Assume that $ f\in {\mathcal S}_{\bf M}$. Then for any $\tau >0$, $n\in {\mathbb N}$ and
      $\varphi\in \Phi$
      , the following inequality holds:
      \begin{equation}\label{En<omega_M}
      E_n (f)_{_{\scriptstyle  {\bf M}}}\le 2C_{n,\varphi}(\tau )\, \omega_\varphi\Big(f, \frac {\tau }n\Big)
      _{_{\scriptstyle  {\bf M}}} ,
      \end{equation}
      where the quantity $C_{n,\varphi}(\tau )$ is defined by (\ref{C_n,varphi,p}).
\end{corollary}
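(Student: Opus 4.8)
The plan is to derive Corollary~\ref{Corollary 01} directly from Theorem~\ref{Theorem_2.1} by exploiting the norm-equivalence relation (\ref{estim-for-norms}) of Lemma~\ref{Lemma_3}. The key observation is that the Luxemburg norm $\|\cdot\|_{_{\scriptstyle {\bf M}}}$ and the Orlicz norm $\|\cdot\|^\ast_{_{\scriptstyle {\bf M}}}$ are comparable with constants $1$ and $2$, and this comparison applies to \emph{every} element of the space ${\mathcal S}_{\bf M}$ (equivalently, to every sequence in $l_{\bf M}({\mathbb Z})$). Since the best approximation $E_n(f)$ and the generalized modulus of smoothness $\omega_\varphi(f,\delta)$ are each defined through one of these two norms applied to the same underlying sequences (namely $\{\widehat f(k)\}_{|k|\ge n}$ for the best approximation, by Lemma~\ref{Lemma_Best_app}, and $\{\varphi(kh)\widehat f(k)\}_{k\in{\mathbb Z}}$ for the modulus), the inequality (\ref{estim-for-norms}) transfers to both quantities.

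First I would bound the Luxemburg-norm best approximation above by its Orlicz-norm counterpart: applying the left-hand inequality in (\ref{estim-for-norms}) to the function $f-S_{n-1}(f)$ and invoking Lemma~\ref{Lemma_Best_app} gives
\begin{equation}\label{step-En}
E_n(f)_{_{\scriptstyle {\bf M}}}=\|f-S_{n-1}(f)\|_{_{\scriptstyle {\bf M}}}\le\|f-S_{n-1}(f)\|^\ast_{_{\scriptstyle {\bf M}}}=E_n(f)^\ast_{_{\scriptstyle {\bf M}}}.
\end{equation}
Next I would bound the Orlicz-norm modulus above by twice the Luxemburg-norm modulus: for each fixed $h$ the right-hand inequality in (\ref{estim-for-norms}), applied to (the sequence associated with) $\Delta_h^\varphi f$, yields $\|\Delta_h^\varphi f\|^\ast_{_{\scriptstyle {\bf M}}}\le 2\|\Delta_h^\varphi f\|_{_{\scriptstyle {\bf M}}}$; taking the supremum over $|h|\le\tau/n$ and using the definition (\ref{general_modulus}) gives
\begin{equation}\label{step-omega}
\omega_\varphi\Big(f,\frac{\tau}{n}\Big)^\ast_{_{\scriptstyle {\bf M}}}\le 2\,\omega_\varphi\Big(f,\frac{\tau}{n}\Big)_{_{\scriptstyle {\bf M}}}.
\end{equation}

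Chaining these two estimates with the conclusion (\ref{En<omega}) of Theorem~\ref{Theorem_2.1} completes the argument: by (\ref{step-En}), then (\ref{En<omega}), then (\ref{step-omega}),
\[
E_n(f)_{_{\scriptstyle {\bf M}}}\le E_n(f)^\ast_{_{\scriptstyle {\bf M}}}\le C_{n,\varphi}(\tau)\,\omega_\varphi\Big(f,\frac{\tau}{n}\Big)^\ast_{_{\scriptstyle {\bf M}}}\le 2C_{n,\varphi}(\tau)\,\omega_\varphi\Big(f,\frac{\tau}{n}\Big)_{_{\scriptstyle {\bf M}}},
\]
which is exactly (\ref{En<omega_M}). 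I do not anticipate a genuine obstacle here, since the constant $C_{n,\varphi}(\tau)$ in (\ref{C_n,varphi,p})--(\ref{I_n,varphi}) depends only on $n$, $\varphi$, and $\tau$ and is unaffected by which norm is used; the factor $2$ is simply the price of passing between the two equivalent norms in the right direction for each of the two quantities. The only point requiring mild care is to apply each half of (\ref{estim-for-norms}) to the correct sequence and in the correct direction—an upper bound on $E_n(f)_{_{\scriptstyle {\bf M}}}$ but an upper bound on $\omega_\varphi^\ast$ in terms of $\omega_\varphi$—so that both inequalities push in the same direction as (\ref{En<omega}).
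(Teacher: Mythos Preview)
Your argument is correct and is exactly the derivation the paper has in mind: Corollary~\ref{Corollary 01} is stated without proof as an immediate consequence of Theorem~\ref{Theorem_2.1} together with the norm equivalence~(\ref{estim-for-norms}) of Lemma~\ref{Lemma_3}, which is precisely the chain $E_n(f)_{_{\scriptstyle {\bf M}}}\le E_n(f)^\ast_{_{\scriptstyle {\bf M}}}\le C_{n,\varphi}(\tau)\,\omega_\varphi^\ast\le 2C_{n,\varphi}(\tau)\,\omega_\varphi$ you wrote out.
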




\begin{corollary}
       \label{Corollary 02}
       Assume that $ f\in {\mathcal S}_{\bf M}$. Then for any $\tau >0$, $n\in {\mathbb N}$ and $\alpha>0$ the following inequality holds:
       \[
       E_n (f)\le 2C_{n,\alpha}(\tau )  \omega_\alpha\Big(f, \frac {\tau }n\Big),
       \]
       where the quantity $C_{n,\alpha}(\tau )$  is defined by (\ref{C_n,varphi,p}) with $\varphi(t)=2^\alpha
       |\sin (t/2) |^\alpha$.
\end{corollary}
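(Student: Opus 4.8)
The plan is to obtain Corollary \ref{Corollary 02} as a direct specialization of Corollary \ref{Corollary 01} and Theorem \ref{Theorem_2.1} to the single function $\varphi(t)=2^\alpha|\sin(t/2)|^\alpha$, using the identity $\omega_\alpha(f,\delta)=\omega_\varphi(f,\delta)$ recorded right after (\ref{general_modulus}). The only preliminary point to settle is that this particular $\varphi$ actually belongs to the admissible class $\Phi$, so that the earlier results may legitimately be invoked; once that is in hand there is essentially nothing left to compute.

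First I would verify the membership $\varphi\in\Phi$. The function $t\mapsto 2^\alpha|\sin(t/2)|^\alpha$ is continuous and even, it is nonnegative and bounded above by $2^\alpha$, and it vanishes at $t=0$. Its zero set is $\{t\in{\mathbb R}:\sin(t/2)=0\}=\{2\pi m:m\in{\mathbb Z}\}$, which is countable and hence of Lebesgue measure zero. Thus every defining requirement of $\Phi$ is met, and with this $\varphi$ fixed the constant $C_{n,\alpha}(\tau)$ is, by the way it is introduced in the statement, literally $C_{n,\varphi}(\tau)$ from (\ref{C_n,varphi,p}).

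Next I would invoke the identity of the two moduli. Relation (\ref{modulus_difference_Fourier_Coeff}) gives $|{[\Delta_h^\alpha f]}\widehat{\ \ }(k)|=\varphi(kh)|\widehat f(k)|=|{[\Delta_h^\varphi f]}\widehat{\ \ }(k)|$ for every $k\in{\mathbb Z}$; since both the Luxemburg and the Orlicz norm of a sequence depend only on the moduli of its entries, the two generalized differences have equal norms, whence $\omega_\alpha(f,\delta)=\omega_\varphi(f,\delta)$ in either norm. It then remains only to substitute into the established inequalities. For the Luxemburg norm, Corollary \ref{Corollary 01} yields $E_n(f)_{_{\scriptstyle {\bf M}}}\le 2C_{n,\varphi}(\tau)\,\omega_\varphi(f,\tau/n)_{_{\scriptstyle {\bf M}}}=2C_{n,\alpha}(\tau)\,\omega_\alpha(f,\tau/n)_{_{\scriptstyle {\bf M}}}$; for the Orlicz norm, Theorem \ref{Theorem_2.1} gives the sharper bound $E_n(f)_{_{\scriptstyle {\bf M}}}^{*}\le C_{n,\varphi}(\tau)\,\omega_\varphi(f,\tau/n)_{_{\scriptstyle {\bf M}}}^{*}$, which a fortiori implies the same estimate with the factor $2C_{n,\alpha}(\tau)$. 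Writing $\|\cdot\|$ for either norm, the two cases combine into the single claimed inequality.

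I do not anticipate a genuine obstacle, as this is a corollary rather than an independent result: its entire content reduces to the specialization $\varphi(t)=2^\alpha|\sin(t/2)|^\alpha$. The only points demanding mild care are the verification that this $\varphi$ lies in $\Phi$ (in particular the measure-zero condition on its zero set) and the uniform treatment of the two norms, where the factor $2$ is needed solely for the Luxemburg norm and is automatic for the Orlicz norm.
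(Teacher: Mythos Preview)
Your proposal is correct and matches the paper's own approach: the paper states Corollary~\ref{Corollary 02} without a separate proof, treating it as the immediate specialization of Theorem~\ref{Theorem_2.1} and Corollary~\ref{Corollary 01} to $\varphi(t)=2^\alpha|\sin(t/2)|^\alpha$, using the identity $\omega_\alpha(f,\delta)=\omega_\varphi(f,\delta)$ already noted after~(\ref{general_modulus}). Your additional care in checking $\varphi\in\Phi$ and in handling the two norms separately is appropriate and only makes the argument more complete.
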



For moduli of smoothness $\omega_\alpha(f,\delta)_{_{\scriptstyle  {\bf M}}}$, in the mentioned above spaces  ${\mathcal S}_{M}$  and ${\mathcal S}_{_{\scriptstyle  \mathbf p,\,\mu}}$, the inequalities
of the type (\ref{En<omega_M}) were proved  in \cite{Chaichenko_Shidlich_Abdullayev_2019} and
\cite{Abdullayev_Chaichenko_Imash kyzy_Shidlich_2020} correspondingly. 
Unlike to \cite{Chaichenko_Shidlich_Abdullayev_2019} and \cite{Abdullayev_Chaichenko_Imash kyzy_Shidlich_2020}, here we find the constant $C_{n,\varphi}(\tau )$ in Jackson-type inequality (\ref{En<omega}). Let us see how accurate this constant is.
For this, consider the case where all functions $M_k(u)=u^p\Big(p^{-1/p}q^{-1/q}\Big)^p$, $p>1$, $1/p+1/q=1$.
  In this case, all functions $\tilde{M}_k(v)=v^q$, the set $\Lambda$ is a set of all sequences of
  positive numbers $\lambda=\{\lambda_k\}_{k\in \mathbb{Z}}$ such that  $\|\lambda\|_{l_q({\mathbb Z})}\le 1$.
  Then the spaces ${\mathcal S}_{\bf M}$ coincide with the spaces ${\mathcal S}^{p}$, $p>1$,
  and by H\"{o}lder inequality  for any $f\in {\mathcal S}^{p}$,  the following relation holds:
  \[
    \|f\|^\ast_{_{\scriptstyle  {\bf M}}}= \sup\limits_{\lambda\in \Lambda} \sum\limits_{k \in \mathbb{Z}}
    \lambda_k|\widehat{f}(k) | \le \sup\limits_{\lambda\in \Lambda}
    \|\lambda \|_{l_p({\mathbb Z})} \cdot \|\{\widehat f({k})\}_{{k}\in\mathbb  Z}
    \|_{l_p({\mathbb Z})} \le \|f\|_{_{\scriptstyle {p}}}.
  \]
Furthermore, if   $f\!\not\equiv 0$, then for the sequence $\lambda^*_k=|\widehat{f}(k)|^{p/q}\Big(\sum_{j\in {\mathbb Z}}|\widehat{f}(k)|^{p}\Big)^{-1/q}$, $k\in {\mathbb Z}$, we have
 $
 \sum_{k \in \mathbb{Z}}     \lambda^*_k|\widehat{f}(k) |=\|f\|_{_{\scriptstyle {p}}}$ 
  and
 $\|\lambda^*\|_{l_q({\mathbb Z})}= 1.
 $ 
Therefore,  in this case $\|f\|^\ast_{_{\scriptstyle  {\bf M}}}=
\|f\|_{_{\scriptstyle {p}}}
$, $p>1$.

In the case $p=1$, the similar equality for norms
 \begin{equation} \label{Norm_eq}
 \| f\|^\ast_{_{\scriptstyle  {\bf M}}}=
\|f\|_{_{\scriptstyle {1}}}
\end{equation}
 obviously can be obtained if we consider all $M_k(u)=u$, $k\in {\mathbb Z}$, and the set
$\Lambda$ is a set of all sequences of  positive numbers $\lambda=\{\lambda_k\}_{k\in \mathbb{Z}}$ such that  $\|\lambda\|_{l_\infty({\mathbb Z})}=\sup_{k\in \mathbb{Z}}\lambda_k \le 1$.

For  fixed $n\in {\mathbb N}$, $\tau>0$  and  for a given $\varphi\in \Phi$, consider the quantity
 \[
    K_{n,\varphi}(\tau)_{_{\scriptstyle {p}}}
    :=\mathop{\sup\limits_{f\in {\mathcal S}^p}} \limits_{f\not\equiv {\rm const}}
     \frac{E_n (f)_{_{\scriptstyle {p}}}}
    {\omega_\varphi(f,\tau/n)_{_{\scriptstyle {p}}}} =
    \mathop{\sup\limits_{f\in {\mathcal S}^p}} \limits_{f\not\equiv {\rm const}}
     \frac{\inf\limits_{{t}_{n-1}\in {\mathcal T}_{n-1} }\|f-{t}_{n-1}\|_{_{\scriptstyle {p}}}}
    {\sup\limits_{|h|\le \delta} \|\Delta_h^\varphi f\|_{_{\scriptstyle {p}}}} .
 \]



\begin{theorem}
       \label{S^p_Theorem}
       Assume that $ f\in {\mathcal S}^p$, $1\le p<\infty$.
       Then for any $\tau >0$, $n\in {\mathbb N}$ and $\varphi\in \Phi$
              , the following inequality holds:
      \begin{equation}\label{En<omega_Sp}
      E_n (f)_{_{\scriptstyle {p}}} \le C_{n,\varphi,p}(\tau )\,
      \omega_\varphi\Big(f,\frac \tau n\Big)_{_{\scriptstyle {p}}},
      \end{equation}
      where
      \begin{equation}\label{C_n,varphi,p}
       C_{n,\varphi,p}(\tau ):= \bigg(\inf\limits _{v  \in  V(\tau )} \frac {v   (\tau ) - v   (0)}{
        I_{n,\varphi,p}(\tau ,v   )}\bigg)^{1/p} ,
      \end{equation}
      and
      \begin{equation}\label{I_n,varphi,p}
      I_{n,\varphi,p}(\tau ,v   ):=
      \inf\limits _{k \in {\mathbb N}:k \ge n} \int\limits _0^{\tau }\varphi^p\Big(\frac {k u}n\Big) dv   (u).
      \end{equation}
      In this case, there exists a function $v^*  \in  V(\tau )$ that realizes the greatest lower bound in (\ref{I_n,varphi,p}).
      Inequality  (\ref{En<omega_Sp}) is unimprovable on the set of all functions
      $f\in {\mathcal S}^{p}$, $f\not \equiv {\rm const}$,   in the sense that
      the following equality is true:
      \begin{equation}\label{S^p_equality}
       K_{n,\varphi}(\tau)_{_{\scriptstyle {p}}}=C_{n,\varphi,p}(\tau ).
       \end{equation}
\end{theorem}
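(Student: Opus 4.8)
The plan is to establish the direct inequality (\ref{En<omega_Sp}) first, which already gives $K_{n,\varphi}(\tau)_{_{\scriptstyle {p}}}\le C_{n,\varphi,p}(\tau)$, and then to prove the reverse inequality by exhibiting near-extremal functions through a minimax (duality) argument. Throughout I would work on the Fourier side, using $E_n(f)_{_{\scriptstyle {p}}}^p=\sum_{|k|\ge n}|\widehat{f}(k)|^p$ (from Lemma \ref{Lemma_Best_app}) and $\|\Delta_h^\varphi f\|_{_{\scriptstyle {p}}}^p=\sum_{k\in\mathbb Z}\varphi^p(kh)|\widehat{f}(k)|^p$ (from (\ref{modulus_generalize difference_Fourier_Coeff})).

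For the direct bound, fix $v\in V(\tau)$. For every $u\in[0,\tau]$ one has $\|\Delta_{u/n}^\varphi f\|_{_{\scriptstyle {p}}}^p\ge\sum_{|k|\ge n}\varphi^p(ku/n)|\widehat{f}(k)|^p$; integrating against $dv(u)$ on $[0,\tau]$ and interchanging the sum and the integral (legitimate since $\varphi$ is bounded), then using the evenness of $\varphi$ and the definition (\ref{I_n,varphi,p}), yields
\[
\int_0^\tau\|\Delta_{u/n}^\varphi f\|_{_{\scriptstyle {p}}}^p\,dv(u)\ \ge\ I_{n,\varphi,p}(\tau,v)\sum_{|k|\ge n}|\widehat{f}(k)|^p=I_{n,\varphi,p}(\tau,v)\,E_n(f)_{_{\scriptstyle {p}}}^p .
\]
Bounding the left-hand side above by $(v(\tau)-v(0))\,\omega_\varphi(f,\tau/n)_{_{\scriptstyle {p}}}^p$, taking the infimum over $v$, and extracting the $p$-th root gives (\ref{En<omega_Sp}); this is exactly the argument of Theorem \ref{Theorem_2.1} carried out for the genuine $\ell_p$-norm. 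In particular $K_{n,\varphi}(\tau)_{_{\scriptstyle {p}}}\le C_{n,\varphi,p}(\tau)$.

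Normalizing $v$ by $v(\tau)-v(0)=1$ identifies $dv$ with a probability measure $\mu$ on $[0,\tau]$ and recasts the constant as $C_{n,\varphi,p}(\tau)^{-p}=\sup_{\mu}\inf_{k\ge n}\int_0^\tau\varphi^p(ku/n)\,d\mu(u)$. Each map $\mu\mapsto\int_0^\tau\varphi^p(ku/n)\,d\mu$ is weak-$*$ continuous, so $\mu\mapsto\inf_{k\ge n}(\cdot)$ is upper semicontinuous on the weak-$*$ compact set of probability measures and attains its supremum at some $\mu^*$; the corresponding distribution function $v^*$ (with $v^*(\tau)-v^*(0)=1$) lies in $V(\tau)$ and realizes the outer infimum in (\ref{C_n,varphi,p}). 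Finiteness of the constant (positivity of $C_{n,\varphi,p}(\tau)^{-p}$) follows because $\varphi^p>0$ almost everywhere, so already $dv=du$ gives $\inf_{k\ge n}\int_0^\tau\varphi^p(ku/n)\,du>0$. This supplies the existence statement invoked in the proof of Theorem \ref{Theorem_2.1}.

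The heart of the matter is the sharpness (\ref{S^p_equality}). For any probability vector $\sigma=\{\sigma_k\}_{k\ge n}$ I would use the test function $f_\sigma\in{\mathcal S}^p$ with $|\widehat{f_\sigma}(k)|^p=\sigma_k$ on frequencies $k\ge n$ (and $\widehat{f_\sigma}(k)=0$ otherwise); then $E_n(f_\sigma)_{_{\scriptstyle {p}}}^p=1$ and $\omega_\varphi(f_\sigma,\tau/n)_{_{\scriptstyle {p}}}^p=\max_{0\le u\le\tau}\sum_{k\ge n}\sigma_k\varphi^p(ku/n)$, so that
\[
K_{n,\varphi}(\tau)_{_{\scriptstyle {p}}}^p\ \ge\ \Big(\inf_{\sigma}\max_{0\le u\le\tau}\sum_{k\ge n}\sigma_k\varphi^p(ku/n)\Big)^{-1}.
\]
The remaining and decisive step is the minimax identity
\[
\inf_{\sigma}\max_{0\le u\le\tau}\sum_{k\ge n}\sigma_k\varphi^p(ku/n)=\sup_{\mu}\inf_{k\ge n}\int_0^\tau\varphi^p(ku/n)\,d\mu(u)=C_{n,\varphi,p}(\tau)^{-p},
\]
which, combined with the previous display and the direct inequality, forces $K_{n,\varphi}(\tau)_{_{\scriptstyle {p}}}=C_{n,\varphi,p}(\tau)$. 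I would obtain this identity from Sion's minimax theorem applied to the affine form $(\mu,\sigma)\mapsto\sum_{k\ge n}\sigma_k\int_0^\tau\varphi^p(ku/n)\,d\mu$, using that $\varphi$ is bounded (so all sums and integrals converge, dominated by $(\sup_u\varphi(u))^p$), that the measures $\mu$ form a weak-$*$ compact convex set, and that the form is semicontinuous and affine in each variable. The main obstacle is precisely the justification of strong duality over the \emph{non-compact} frequency set $\{k\ge n\}$: the naive exchange $\sup_\mu\inf_k=\inf_k\sup_\mu$ fails, and the correct dual variable is a mixed strategy $\sigma$ over frequencies rather than a single $k$. I would control the non-compactness by first restricting to finitely supported $\sigma$ (which is harmless for the infimum since $\varphi$ is bounded) and passing to the limit, noting that whenever an optimal $\sigma^*$ exists the function $f_{\sigma^*}$ is genuinely extremal and attains equality in (\ref{En<omega_Sp}).
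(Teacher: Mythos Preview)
Your argument is correct and parallels the paper's: the direct inequality is obtained identically by integrating $\|\Delta_{u/n}^\varphi f\|_p^p$ against $dv$, and the sharpness is reduced to the same minimax identity
\[
\inf_{\sigma}\max_{u\in[0,\tau]}\sum_{k\ge n}\sigma_k\,\varphi^p\Big(\frac{ku}{n}\Big)
=\sup_{\mu}\inf_{k\ge n}\int_0^\tau\varphi^p\Big(\frac{ku}{n}\Big)\,d\mu(u),
\]
with the set $\{g_\sigma\}$ coinciding with the paper's $W_{n,\varphi,p}$. The only difference is the duality tool: the paper invokes a classical formula for the distance from a convex set in $C[0,\tau]$ (Proposition~A, Korneichuk), which simultaneously yields the extremal $v^*$, while you use Sion's minimax theorem and obtain $v^*$ separately via weak-$*$ compactness and upper semicontinuity; both are legitimate, and yours avoids the step of restricting from signed $g$ of bounded variation to nondecreasing $v$. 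One incorrect aside: your claim that ``already $dv=du$ gives $\inf_{k\ge n}\int_0^\tau\varphi^p(ku/n)\,du>0$'' fails whenever $\varphi\in\Phi$ has compact support (then the integral is $O(1/k)\to 0$); this does not damage the proof, since the theorem makes no finiteness assertion for $C_{n,\varphi,p}(\tau)$ and the minimax identity still holds with common value $0$, but that sentence should be removed.
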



 \begin{proof}   Here, we basically use
 the arguments given in \cite{Babenko_1986}, \cite{Chernykh_1967}, \cite{Chernykh_1967_MZ} and
  \cite{Stepanets_Serdyuk_2002}.   Let $ f\in {\mathcal S}^p$, $n\in {\mathbb N}$ and $h\in {\mathbb R}$.   By virtue of
   (\ref{modulus_generalize difference_Fourier_Coeff}) and (\ref{norm_Sp}), we have
 \[
 \|\Delta _h^\varphi f\|_{_{\scriptstyle {p}}}^p \ge \sum\limits_{|k|\ge n}
    \varphi^p(kh)|\widehat{f}(k) |^p
   \]
\[
   = \frac{I_{n,\varphi,p}(\tau ,v   )}{v (\tau)-v (0)}\sum\limits_{|k|\ge n}
      |\widehat{f}(k) |^p+   \sum\limits_{|k|\ge n}
      |\widehat{f}(k) |^p \Big(\varphi^p(kh)-\frac{I_{n,\varphi,p}(\tau ,v   )}{v (\tau)-v (0)}\Big).
\]
For  any $ u\in [0,\tau]$, we get
 \begin{equation}\label{diff_est_Sp}
    \|\Delta _{\frac un}^\varphi f\|_{_{\scriptstyle {p}}}^p \ge
     \frac{I_{n,\varphi,p}(\tau ,v   )}{v (\tau)-v (0)}\sum\limits_{|k|\ge n}
      |\widehat{f}(k) |^p
         +   \sum\limits_{|k|\ge n}  |\widehat{f}(k) |^p \bigg(\varphi^p\Big(\frac {k u}n\Big)-\frac{I_{n,\varphi,p}(\tau ,v   )}{v (\tau)-v (0)}\bigg).
 \end{equation}
The both sides of inequality (\ref{diff_est_Sp}) are nonnegative  and, in view of the
boundedness of the function $\varphi$, the series on its right-hand side is majorized on the entire
real axis by the absolutely convergent series
  $C^p(\varphi)\sum_{|k|\ge n}  |\widehat{f}(k) |^p$, where $C(\varphi):=\max_{u\in {\mathbb R}} \varphi(u)$. Then
 integrating this inequality with respect to $dv  (u)$  from  $0$  to $\tau,$ we get
\begin{equation}\label{diff_est_2_Sp}
    \int\limits  _0^{\tau }\|\Delta _{\frac un}^\varphi f\|_{_{\scriptstyle {p}}}^pdv  \ge
     I_{n,\varphi,p}(\tau ,v   )\sum\limits_{|k|\ge n}   |\widehat{f}(k) |^p
     $$
     $$
    +   \sum\limits_{|k|\ge n}
     |\widehat{f}(k) |^p \bigg(\int\limits  _0^{\tau } \varphi^p\Big(\frac {k u}n\Big)dv - I_{n,\varphi,p}(\tau ,v   ) \bigg).
 \end{equation}
By virtue of the definition of $I_{n,\varphi,p}(\tau ,v   )$, we see that the second term on the right-hand side of (\ref{diff_est_2_Sp}) is nonnegative. Therefore, for any function $v  \in  V(\tau )$,  we have
\[
    \int\limits  _0^{\tau }\|\Delta _{\frac un}^\varphi f\|_{_{\scriptstyle {p}}}^p dv  \ge
     I_{n,\varphi,p}(\tau ,v   )\sum\limits_{|k|\ge n}   |\widehat{f}(k) |^p \ge
     I_{n,\varphi,p}(\tau ,v   ) E_n^p (f)_{_{\scriptstyle {p}}}.
     \]
  Hence,
\begin{equation}\label{Module_Est_NEWWWW}
 E_n^p (f)_{_{\scriptstyle {p}}} \le \frac 1{I_{n,\varphi,p}(\tau ,v   )}\int\limits  _0^{\tau }\|\Delta _{\frac un}^\varphi f\|_{_{\scriptstyle {p}}}^pdv
\le \frac 1{I_{n,\varphi,p}(\tau ,v   )}\int\limits  _0^{\tau }\omega_\varphi^p \Big(f,\frac un\Big)_{_{\scriptstyle {p}}} dv.
 \end{equation}
whence  taking into account nondecreasing of the function $\omega_\varphi$,
 we immediately obtain relation (\ref{En<omega_Sp}) and  the estimate
   \begin{equation}\label{K_n<C_n}
   K_{n,\varphi}(\tau)_{_{\scriptstyle  {p}}} \le C_{n,\varphi,p}(\tau ).
 \end{equation}



Let us show that relation  (\ref{K_n<C_n}) is the equality.  By virtue of Lemma \ref{Lemma_Best_app}, we have
 \begin{equation}\label{K_n_new}
   K_{n,\varphi}(\tau)_{_{\scriptstyle  {p}}}
   =\mathop{\sup\limits_{f\in {\mathcal S}^p}}\limits_{f\not\equiv {\rm const}}
   \frac { \sum _{|k| \ge n}  |\widehat{f}(k)|^p}
    {\sup _{|h|\le \tau}\sum _{|k| \ge n}  \varphi^p(kh/n) |\widehat{f}(k)|^p}.
 \end{equation}
and in (\ref{K_n_new}), it is sufficient to consider supremum over all functions $f\in {\mathcal S}^p$ such that $\sum _{|k| \ge n}  |\widehat{f}(k)|^p\le 1$. Therefore, taking into account the parity of the function $\varphi$, we get
 \begin{equation}\label{K_n_new1}
   K_{n,\varphi}^{-p}(\tau)_{_{\scriptstyle  {p}}} \le
   J_{n,\varphi,p}(\tau):=\inf\limits_{w\in W_{n,\varphi,p}}\|w\|_{_{\scriptstyle  C_{[0,\tau]}}},
 \end{equation}
where the set
\begin{equation}\label{W_{n,varphi}}
 W_{n,\varphi,p}:=\Big\{\omega(u)=\sum_{j=n}^\infty \varrho_j \varphi^p(ju/n): \varrho_j\ge 0, \ \sum_{j=n}^\infty \varrho_j=1 \Big\}.
 \end{equation}
 For what follows, we need a duality relation in the space $C_{[a,b]},$  (see, e.g., \cite[Ch.~1.4]{Korneichuk 1987}).

\proclaim{Proposition A}\cite[Ch.~1.4]{Korneichuk 1987}
         \label{Duality_relation}
        If $F$ is a convex set in the space  $C_{[a,b]},$ then for any function $x\in C_{[a,b]}$,
        \begin{equation}\label{(6.38)}
        \inf\limits_{u\in F}\|x-u\|_{_{C_{[a,b]}}}=\sup\limits _{{{\mathop {V}\limits_a^b}}(g)\le 1}\Big( \int\limits _a^bx(t)dg(t)-\sup\limits  _{u\in F}\int\limits _a^bu(t)dg(t)\Big).
        \end{equation}
        For $x\in C_{[a,b]}\setminus \bar F$, where $\bar F$ is the closure of a set $F$,  there exists a function $g_*$ with variation equal to 1 on $[a,b]$ that realizes the least upper bound in (\ref{(6.38)}).
\endproclaim

It is easy to show that the set $W_{n,\varphi,p}$  is a convex   subset of the space
$C_{[0, \tau]}$. Therefore, setting  $a=0,$ $b=\tau,$ $x(t)\equiv
0,$ $u(t)=w(t) \in W_{n,\varphi,p},$ $F=W_{n,\varphi,p},$ from relation (\ref{(6.38)}) we get
 \begin{equation}\label{(6.39)}
   J_{n,\varphi,p}(\tau )=\inf\limits _{w\in W_{n,\varphi,p}}\|0-w\|_{_{C_{[0, \tau ]}}}
   $$
   $$
   =\sup\limits _{\mathop {V}\limits _0^{\tau}(g)\le 1}\Big(0
   -\sup\limits _{w\in W_{n,\varphi,p}}\int\limits _0^{\tau}w(t)dg(t)\Big)= \sup\limits _{\mathop {V}\limits _0^{\tau }(g)
   \le 1}\inf\limits _{w\in W_{n,\varphi,p}}\int\limits _0^{\tau } w(t)dg(t).
 \end{equation}
Furthermore, according to the Proposition A, there exists a function $g_*(t),$ that realizes the least upper bound in (\ref{(6.39)}) and such that $\mathop {V}\limits _0^{\tau }(g_*)=1$. Since every function $w\in W_{n,\varphi,p}$ is nonnegative, it suffices to take the supremum on the right-hand side of  (\ref{(6.39)})
over the set of nondecreasing functions $v  (t)$ for which $v  (\tau ) - v  (0)\le 1.$ For such functions, by virtue of (\ref{I_n,varphi}) and (\ref{W_{n,varphi}}), the following equality is true:
 \begin{equation}\label{(6.40)}
 \inf\limits _{w\in W_{n,\varphi,p}}\int\limits _0^{\tau }w(t)dv   (t)=I_{n,\varphi,p}(\tau ,v  ).
 \end{equation}
Hence, there exists a function $v _*\in V(\tau )$ such that $v _*(\tau )-v_*(0)=1$ and
  \begin{equation}\label{(6.41)}
  I_{n,\varphi,p}(\tau ,v _*)=\sup\limits _{v \in  V(\tau ):\mathop {V}\limits _0^{\tau }(v  )\le 1} I_{n,\varphi,p}(\tau ,v  )=J_{n,\varphi,p}(\tau ).
  \end{equation}
From relations    (\ref{K_n_new1}) and (\ref{(6.41)}), we get the necessary estimate:
  \[
     K_{n,\varphi}^p(\tau)_{_{\scriptstyle  {p}}}\ge \frac 1{
  J_{n,\varphi,p}(\tau )
  }
  =\frac 1{ I_{n,\varphi,p}(\tau ,v  _*)}=\frac {v _*(\tau )-v_*(0)} {I_{n,\varphi,p}(\tau ,v_*)}=C_{n,\varphi,p}^p(\tau ).
\]
$\hfill\Box$
 \end{proof}


From Theorem \ref{S^p_Theorem}, in particular, follows that  the  constant $C_{n,\varphi}(\tau )=
C_{n,\varphi,1}(\tau )$  is  exact in the Jackson-type inequality (\ref{En<omega}) in the case when $ {\mathcal S}_{\bf M}={\mathcal S}^1$. In this case, estimate (\ref{K_n<C_n}) in the proof  obviously  follows  directly from estimate (\ref{En<omega}) and relation (\ref{Norm_eq}). For $p>1$, estimate (\ref{K_n<C_n}) is more accurate than the estimate that can be obtained using similar arguments from Theorem \ref{Theorem_2.1}.


In the Lebesgue space $L_2({\mathbb T})$,   such result  was proved  for   ordinary moduli of smoothness $\omega_\alpha(f,\delta)_{_{\scriptstyle {p}}}$ with $\alpha= 1$ by  Babenko \cite{Babenko_1986}. In the spaces ${\mathcal S}^p$, for  moduli  $\omega_\alpha(f,\delta)_{_{\scriptstyle {p}}}$,  this theorem  was proved
by Stepanets and Serdyuk \cite{Stepanets_Serdyuk_2002}.
In the spaces ${\mathcal S}^p({\mathbb T}^d)$ of functions of several variables,  for  moduli  $\omega_\alpha(f,\delta)_{_{\scriptstyle {p}}}$,
such  result was obtained 
in \cite{Abdullayev_Ozkartepe_Savchuk_Shidlich_2019}. For generalized moduli of smoothness,  the similar result was proved  by  Vasil'ev \cite{Vasil'ev 2001} in $L_2({\mathbb T})$. We also mention the paper of Vakarchuk \cite{Vakarchuk 2016} which, in particular, contains a survey of the main results on Jackson-Type inequalities with generalized moduli of smoothness in the spaces $L_2({\mathbb T})$.




\section{ Inverse approximation theorem.}

 \begin{theorem}
       \label{Theorem_2}
       Let   $ f\in {\mathcal S}_{\bf M}$, the function $\varphi\in \Phi$ is  nondecreasing on an interval $[0,\tau]$ and
       $\varphi(\tau)=\max\{\varphi(t):t\in {\mathbb R}\}$. Then for any $n\in {\mathbb N}$,
       the following inequality holds:
       \begin{equation}\label{S_M.12}
       \omega _\varphi\Big(f, \frac{\tau}{n}\Big)
       \le    \sum _{\nu =1}^{n}\Big(\varphi\Big(\frac {\tau \nu}n\Big)-\varphi\Big(\frac {\tau (\nu-1)}n\Big)\Big)
       E_{\nu} (f)
       .
       \end{equation}
 \end{theorem}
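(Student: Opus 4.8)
The plan is to work entirely at the level of Fourier coefficients and, for each fixed $h$ with $|h|\le \tau/n$, to dominate the coefficient sequence of $\Delta_h^\varphi f$ by a nonnegative combination of the coefficient sequences of the remainders $R_\nu:=f-S_{\nu-1}(f)$, $\nu=1,\dots,n$. Recall that by Lemma~\ref{Lemma_Best_app} one has $\|R_\nu\|=E_\nu(f)$, that the Fourier coefficients of $R_\nu$ equal $\widehat f(k)$ for $|k|\ge\nu$ and vanish for $|k|<\nu$, and that by (\ref{modulus_generalize difference_Fourier_Coeff}) the $k$th coefficient of $\Delta_h^\varphi f$ is $\varphi(kh)\widehat f(k)$. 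Put $d_\nu:=\varphi(\tau\nu/n)-\varphi(\tau(\nu-1)/n)$; these numbers are nonnegative since $\varphi$ is nondecreasing on $[0,\tau]$, and by telescoping $\sum_{\nu=1}^{m}d_\nu=\varphi(\tau m/n)$ for $0\le m\le n$.

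The key (and the only nonroutine) step is the pointwise estimate
$$ \varphi(kh)\le \varphi\!\Big(\frac{\tau\min(|k|,n)}{n}\Big)=\sum_{\nu=1}^{n} d_\nu\,\chi_{\{\nu\le|k|\}},\qquad k\in{\mathbb Z}, $$
valid for every $|h|\le\tau/n$, where $\chi$ denotes the indicator. This is exactly where the two hypotheses on $\varphi$ are used, in a complementary way for low and high frequencies. For $|k|\ge n$ the global maximality $\varphi(\tau)=\max_{t}\varphi(t)$ gives $\varphi(kh)\le\varphi(\tau)=\varphi(\tau\min(|k|,n)/n)$; for $|k|<n$, the evenness of $\varphi$ together with $|kh|\le|k|\tau/n\le\tau$ and the monotonicity of $\varphi$ on $[0,\tau]$ give $\varphi(kh)=\varphi(|k|\,|h|)\le\varphi(|k|\tau/n)$.

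Multiplying this estimate by $|\widehat f(k)|\ge0$ yields the coefficientwise domination $|\varphi(kh)\widehat f(k)|\le\sum_{\nu=1}^{n}d_\nu|\widehat{R_\nu}(k)|$ for all $k$ (both sides vanish for $k=0$, since $\varphi(0)=0$ and $\widehat{R_\nu}(0)=0$). It then remains to pass to norms. Since both the Luxemburg norm (\ref{def_Lux_norm}) and the Orlicz norm (\ref{def-Orlicz-norm}) are solid, i.e.\ monotone with respect to the moduli of the coefficients (because each $M_k$ is nondecreasing, respectively because $\lambda_k\ge0$ in (\ref{def-Orlicz-norm})), and satisfy the triangle inequality and positive homogeneity, the displayed domination gives
$$ \|\Delta_h^\varphi f\|\le\Big\|\Big\{\sum_{\nu=1}^{n}d_\nu|\widehat{R_\nu}(k)|\Big\}_{k}\Big\|\le\sum_{\nu=1}^{n}d_\nu\,\|R_\nu\|=\sum_{\nu=1}^{n}d_\nu\,E_\nu(f). $$
Taking the supremum over $|h|\le\tau/n$ and recalling (\ref{general_modulus}) yields (\ref{S_M.12}). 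I expect the only delicate point to be the case distinction in the key estimate, which is precisely the place where the monotonicity of $\varphi$ on $[0,\tau]$ and its global maximality at $\tau$ are jointly needed; everything after it is a soft argument relying solely on solidity, homogeneity, and subadditivity of the norm, all of which hold simultaneously for $\|\cdot\|_{_{\scriptstyle {\bf M}}}$ and $\|\cdot\|^\ast_{_{\scriptstyle {\bf M}}}$, so the single computation covers both $\omega_\varphi(f,\cdot)_{_{\scriptstyle {\bf M}}}$ and $\omega_\varphi(f,\cdot)_{_{\scriptstyle {\bf M}}}^*$.
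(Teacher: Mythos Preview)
Your proof is correct, and the underlying idea coincides with the paper's: dominate $\varphi(kh)$ by $\varphi(\tau\min(|k|,n)/n)$ for $|h|\le\tau/n$, and then reorganize into a sum weighted by the increments $d_\nu$. The execution, however, is genuinely more direct than the paper's. The paper first truncates $f$ to a trigonometric polynomial $f_0=S_{N_0}(f)$ via an $\varepsilon$-argument (so as to work with finite sums), splits $\Delta_h^\varphi f_0$ according to $f_0=(f_0-S_{n-1})+S_{n-1}$, and then invokes a separate Abel-summation lemma to carry out the reorganization, finally letting $\varepsilon\to0$. You bypass the truncation entirely by appealing explicitly to solidity of the norm (monotonicity in the moduli of the coefficients), and you replace the Abel-summation lemma by the transparent telescoping identity $\varphi(\tau m/n)=\sum_{\nu=1}^{m}d_\nu=\sum_{\nu=1}^{n}d_\nu\,\chi_{\{\nu\le m\}}$, so that the passage from the coefficientwise domination to $\sum_\nu d_\nu\,\|R_\nu\|$ becomes a single application of the triangle inequality. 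Your route is shorter and isolates precisely where each hypothesis on $\varphi$ (evenness, monotonicity on $[0,\tau]$, global maximality at~$\tau$) is used; the paper's route reaches the same destination but conceals the solidity step inside the passage to the auxiliary functions $H_\nu$.
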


\begin{proof}
 Let us use the proof scheme from \cite{Stepanets_Serdyuk_2002}, modifying it taking into account the peculiarities of the spaces
 ${\mathcal S}_{\bf M}$ and the definition of the modulus of smoothness $\omega _\varphi$.

 Let $ f\in {\mathcal S}_{\bf M}$. For any $\varepsilon>0$ there exist a number $N_0=N_0(\varepsilon)\in {\mathbb N}$, $N_0>n$,
such that for any $N>  N_0$, we have
  \[
  E_N(f)
  =\|f-{S}_{N-1}({f})\|
  <\varepsilon/\varphi(\tau).
  \]
 Let us set $f_{0}:=S_{N_0}(f)$. Then in view of  (\ref{modulus_generalize difference_Fourier_Coeff}), we see that
\begin{equation}\label{(6.3100)}
\|\Delta_h^\varphi f\|
\le    \|\Delta_h^\varphi f_{0}\|
+\|\Delta_h^\varphi (f-f_{0})\|
\le  \|\Delta_h^\varphi f_0\|
 +
 \varphi(\tau) E_{N_0+1}(f)
 <\|\Delta_h^\varphi f_0\|
 +\varepsilon.
\end{equation}
Further, let ${S}_{n-1}:={S}_{n-1}(f_0)$  be the Fourier sum of $f_0$.  Then  by virtue of (\ref{modulus_difference_Fourier_Coeff}), for $|h|\le \tau /n$, we have
\[
 \|\Delta_h^\varphi f_0\|
 =\|\Delta_h^\varphi (f_0-S_{n-1})+\Delta_h^\varphi S_{n-1}\|
 \le\Big\| \varphi(\tau)(f_0-S_{n-1})+\sum _{|k|\le n-1}\varphi(kh)|\widehat{f}(k)|
 {\mathrm{e}^{\mathrm{i}k\cdot}}\Big\|
\]
\begin{equation}\label{(6.73)}
\le  \Big\|\varphi(\tau) \sum _{\nu =n}^{N_0} H_{\nu} +
     \sum _{\nu =1}^{n-1}\varphi\Big(\frac {\tau \nu}n\Big)H_{\nu}  \Big\|
     ,
\end{equation}
where $H_{\nu}(x) :=H_{\nu}(f,x)=|\widehat{f}(\nu)| {\mathrm{e}^{\mathrm{i}\nu x}}+
|\widehat{f}(-\nu)| {\mathrm{e}^{-\mathrm{i}\nu x}}$, $\nu=1,2,\ldots$

Now we use the following assertion which is proved directly.

 \begin{lemma}
       \label{Lemma_31}
       Let  $\{c_{\nu}\}_{\nu=1}^\infty$ and $\{a_{\nu}\}_{\nu=1}^\infty$ be arbitrary numerical sequences.
       Then the following equality holds for  all natural $m$, $M$ and $N$ $m\le M<N$:
       \begin{equation}\label{(6.74)}
       \sum _{\nu =m}^Ma_{\nu }c_{\nu }=a_m\sum _{\nu=m}^{N }c_{\nu }+
       \sum _{\nu =m+1}^M(a_{\nu } -a _{\nu-1})\sum _{i=\nu }^{N }c_i-a_M\sum _{\nu =M+1}^{N }c_{\nu}.
       \end{equation}
 \end{lemma}

Setting  $a_{\nu }=\varphi\Big(\frac {\tau \nu}n\Big),$  $c_{\nu }=H_{\nu}(x), $ $m=1$,  $M=n-1$ and $N=N_0$ in (\ref{(6.74)}), we get
 \[
  \sum _{\nu =1}^{n-1}\varphi\Big(\frac {\tau \nu}n\Big)H_{\nu}(x)=
   \sum _{\nu =1}^{N_0}H_{\nu}(x)
   \]
   \[
   +
   \sum _{\nu =2}^{n-1}\bigg(\varphi\Big(\frac {\tau \nu}n\Big)-\varphi\Big(\frac {\tau (\nu-1)}n\Big)\bigg)
    \sum_{i=\nu }^{N_0}H_{i}(x) -\varphi\Big(\frac {\tau (n-1)}n\Big)\sum
_{\nu =n}^{N_0 }H_{\nu}(x).
  \]
Therefore,
  \[
 \bigg\|\varphi(\tau) \sum _{\nu =n}^{N_0} H_{\nu} +
     \sum _{\nu =1}^{n-1}\varphi\Big(\frac {\tau \nu}n\Big)H_{\nu} \bigg\|
  \]
  \[
   \le
        \bigg\|\varphi(\tau) \sum _{\nu =n}^{N_0} H_{\nu} +\sum _{\nu =1}^{n-1}\bigg(\varphi\Big(\frac {\tau \nu}n\Big)-\varphi\Big(\frac {\tau (\nu-1)}n\Big)\bigg)
    \sum_{i=\nu }^{N_0}H_{i}  -\varphi\Big(\frac {\tau (n-1)}n\Big)\sum
_{\nu =n}^{N_0}H_{\nu}  \bigg\|
\]
\begin{equation}\label{(6.74aqq)}
 \le \bigg\|\sum _{\nu =1}^{n}\bigg(\varphi\Big(\frac {\tau \nu}n\Big)-\varphi\Big(\frac {\tau (\nu-1)}n\Big)\bigg)
    \sum_{i=\nu }^{N_0}H_{i}
    \bigg\|
    $$
    $$
    \le    \sum _{\nu =1}^{n}
    \bigg(\varphi\Big(\frac {\tau \nu}n\Big)-\varphi\Big(\frac {\tau (\nu-1)}n\Big)\bigg) E_{\nu} (f_0)
    .
  \end{equation}
Combining relations (\ref{(6.3100)}), (\ref{(6.73)})  and (\ref{(6.74aqq)}) and taking into account the definition of the function $f_0$, we see that for  $|h|\le \tau /n$, the following inequality holds:
\[
  \|\Delta_h^\varphi f\|
  \le
    \sum _{\nu =1}^{n}\Big(\varphi\Big(\frac {\tau \nu}n\Big)-\varphi\Big(\frac {\tau (\nu-1)}n\Big)\Big)
    E_{\nu} (f)
    +\varepsilon
\]
 which, in view of arbitrariness of $\varepsilon$,  gives us (\ref{S_M.12}). $\hfill\Box$
\end{proof}

As noted  above, for  $\varphi(t)=2^\alpha |\sin (t/2) |^\alpha$, $\alpha>0$, we have
$\omega_\varphi(f,\delta)=\omega_\alpha(f,\delta)$. In this case, the number $\tau=\pi$.
If $\alpha\ge 1$, then using the inequality $x^\alpha-y^\alpha \le \alpha x^{\alpha-1}(x-y),$ $x>0, y>0$
(see, for example, \cite[Ch.~1]{Hardy_Littlewood_Polya_1934}), 
and the usual trigonometric formulas, for $\nu=1,2,\ldots,n,$ we get
\[
  \varphi\Big(\frac {\tau \nu}n\Big)-
  \varphi\Big(\frac {\tau (\nu-1)}n\Big)=
  2^\alpha \Big(\Big|\sin \Big(\frac {\pi \nu}n\Big) \Big|^\alpha-
  \Big|\sin \Big(\frac {\pi (\nu-1)}n\Big) \Big|^\alpha\Big)\le
\]
\[
    \le 2^\alpha \alpha |\sin \Big(\frac {\pi \nu}n\Big) \Big|^{\alpha-1}
    \Big|\sin \Big(\frac {\pi \nu}n\Big) -
  \sin \Big(\frac {\pi (\nu-1)}{n}\Big) \Big|
  \le \alpha \Big(\frac{2\pi }{n}\Big)^\alpha \nu^{\alpha-1}.
\]
If  $0<\alpha<1$, then the similar estimate can be obtained  using the inequality $x^\alpha-y^\alpha \le \alpha y^{\alpha-1}(x-y)$,
which holds for any $x>0, y>0,$ \cite[Ch.~1]{Hardy_Littlewood_Polya_1934}. 
 Hence, we get the following statement:

 \begin{corollary}
       \label{Corollary 21}
       Let   $ f\in {\mathcal S}_{\bf M}$ and $\alpha>0$. Then for any $n\in {\mathbb N}$,
       the following inequality holds:
       \begin{equation}\label{Inverse_Inequality}
       \omega _\alpha \Big(f, \frac{\pi}{n}\Big)\le      \alpha \Big(\frac{2\pi }{n}\Big)^\alpha
       \sum _{\nu =1}^{n} \nu^{\alpha-1} E_{\nu} (f).
       \end{equation}
\end{corollary}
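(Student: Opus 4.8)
The plan is to derive Corollary~\ref{Corollary 21} directly from the inverse Theorem~\ref{Theorem_2} by specializing the function $\varphi$ and bounding the resulting telescoping coefficients. First I would set $\varphi(t)=2^\alpha|\sin(t/2)|^\alpha$, which by the remark following~(\ref{general_modulus}) yields $\omega_\varphi(f,\delta)=\omega_\alpha(f,\delta)$, and recall that for this choice the relevant parameter is $\tau=\pi$, since the maximum of $\varphi$ over $\mathbb{R}$ equals $2^\alpha$ and is attained at $t=\pi$. I must verify that this $\varphi$ meets the hypotheses of Theorem~\ref{Theorem_2}: on $[0,\pi]$ the map $t\mapsto|\sin(t/2)|^\alpha$ is nondecreasing, and $\varphi(\pi)=2^\alpha=\max\{\varphi(t):t\in\mathbb{R}\}$, so both conditions hold. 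Substituting into~(\ref{S_M.12}) immediately gives
\[
\omega_\alpha\Big(f,\frac{\pi}{n}\Big)\le\sum_{\nu=1}^{n}\Big(\varphi\Big(\frac{\pi\nu}{n}\Big)-\varphi\Big(\frac{\pi(\nu-1)}{n}\Big)\Big)E_\nu(f).
\]

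The heart of the argument is then the purely elementary estimate of each difference $\varphi(\pi\nu/n)-\varphi(\pi(\nu-1)/n)$ by $\alpha(2\pi/n)^\alpha\nu^{\alpha-1}$, and this is precisely the computation displayed in the paragraph preceding the corollary. The key step is the power-difference inequality from Hardy--Littlewood--P\'olya: for $x>y>0$ one has $x^\alpha-y^\alpha\le\alpha x^{\alpha-1}(x-y)$ when $\alpha\ge 1$, and $x^\alpha-y^\alpha\le\alpha y^{\alpha-1}(x-y)$ when $0<\alpha<1$. Applying this with $x=|\sin(\pi\nu/n)|$ and $y=|\sin(\pi(\nu-1)/n)|$ (after factoring out $2^\alpha$) reduces the problem to bounding the factor $|\sin(\pi\nu/n)|^{\alpha-1}$ times the sine difference $|\sin(\pi\nu/n)-\sin(\pi(\nu-1)/n)|$. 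The latter is controlled by a standard mean-value or product-to-sum estimate, giving a factor of order $\pi/n$, while the former contributes the $\nu^{\alpha-1}$ growth through the elementary bound $|\sin(\pi\nu/n)|\le \pi\nu/n$.

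The one point requiring care, and the main obstacle, is the case split on $\alpha$ together with the book-keeping of which sine factor carries the exponent $\alpha-1$. For $\alpha\ge 1$ the larger term $x^{\alpha-1}$ appears, and since $x=|\sin(\pi\nu/n)|\le\pi\nu/n$ one gets $x^{\alpha-1}\le(\pi\nu/n)^{\alpha-1}$ directly; for $0<\alpha<1$ the exponent is negative, so one instead uses $y^{\alpha-1}$ and must argue that $|\sin(\pi(\nu-1)/n)|$ is comparable to $\nu/n$ up to constants, handling the index $\nu=1$ (where $y=0$) separately by a direct estimate of $\varphi(\pi/n)=2^\alpha|\sin(\pi/(2n))|^\alpha$. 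In both regimes combining the two factors produces the common bound $\alpha(2\pi/n)^\alpha\nu^{\alpha-1}$, and substituting this into the telescoped sum yields~(\ref{Inverse_Inequality}) directly, completing the proof.
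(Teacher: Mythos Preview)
Your proposal is correct and follows essentially the same route as the paper: specialize Theorem~\ref{Theorem_2} with $\varphi(t)=2^\alpha|\sin(t/2)|^\alpha$ and $\tau=\pi$, then bound each telescoping coefficient via the Hardy--Littlewood--P\'olya power inequalities (the $\alpha\ge 1$ and $0<\alpha<1$ cases separately). You are in fact slightly more careful than the paper, which simply asserts that ``the similar estimate can be obtained'' for $0<\alpha<1$ without isolating the $\nu=1$ boundary term where $y=0$; your remark that this index needs a direct estimate of $\varphi(\pi/n)$ is well taken.
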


Note that in the above-mentioned spaces  ${\mathcal S}_{M}$ and ${\mathcal S}_{_{\scriptstyle  \mathbf p,\,\mu}}$,
the similar estimates were obtained for moduli of smoothness and  best approximations determined  with respect to the corresponding Luxemburg norms in \cite{Abdullayev_Chaichenko_Imash kyzy_Shidlich_2020}
and  \cite{Chaichenko_Shidlich_Abdullayev_2019}. In ${\mathcal S}^p$, such estimates were obtained in  \cite{Sterlin_1972} and \cite{Stepanets_Serdyuk_2002}.  For the Lebesgue spaces $L_p$, inequalities of the type (\ref{Inverse_Inequality}) were proved by M.~Timan (see, for example, \cite[Ch. 2]{M_Timan_M2009}, \cite[Ch. 6]{A_Timan_M1960}).


\begin{corollary}
      \label{Corollary 2}
      Assume that the sequence of the best approximations
      $E_n(f)
      $ of a function $f\in {\mathcal S}_{\bf M}$ satisfies the following relation for some $\beta >0$:
      \[
      E_n(f)
      = {\mathcal O}(n^{-\beta }).
      \]
      Then, for any $\alpha>0$, one has
      \[
      \omega _\alpha(f, t)
      =    \left \{ \begin{matrix}  {\mathcal O}(t^{\beta }) & \hfill \mbox {for} \ \ \beta <\alpha, \hfill \cr
                        {\mathcal O}(t^\alpha|\ln t|) & \hfill \mbox {for}\ \ \beta =\alpha, \hfill \cr
                        {\mathcal O}(t^\alpha) & \hfill \mbox {for} \ \ \beta >\alpha.\hfill
                        \end{matrix} \right.
     \]
\end{corollary}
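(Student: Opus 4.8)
The plan is to deduce the three asymptotic regimes directly from the inverse inequality in Corollary~\ref{Corollary 21}, which for $f\in {\mathcal S}_{\bf M}$ and $\alpha>0$ gives
\[
\omega_\alpha\Big(f,\frac{\pi}{n}\Big)\le \alpha\Big(\frac{2\pi}{n}\Big)^\alpha \sum_{\nu=1}^{n}\nu^{\alpha-1}E_\nu(f).
\]
Under the hypothesis $E_\nu(f)={\mathcal O}(\nu^{-\beta})$ there is a constant $A>0$ with $E_\nu(f)\le A\,\nu^{-\beta}$ for all $\nu$, so the whole problem reduces to estimating the weighted sum $\sum_{\nu=1}^{n}\nu^{\alpha-1-\beta}$ and then multiplying by $n^{-\alpha}$. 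First I would substitute this bound into the displayed inequality to obtain
\[
\omega_\alpha\Big(f,\frac{\pi}{n}\Big)\le \alpha\,(2\pi)^\alpha A\,n^{-\alpha}\sum_{\nu=1}^{n}\nu^{\alpha-1-\beta},
\]
so the entire argument becomes a routine comparison of the partial sum $\sum_{\nu=1}^n \nu^{\gamma}$ with an integral, where $\gamma=\alpha-1-\beta$.

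The three cases correspond exactly to the sign and criticality of the exponent $\gamma=\alpha-1-\beta$. When $\beta<\alpha$ we have $\gamma>-1$, so the sum behaves like $\int_1^n u^{\gamma}\,du \asymp n^{\gamma+1}=n^{\alpha-\beta}$; multiplying by $n^{-\alpha}$ yields $\omega_\alpha(f,\pi/n)={\mathcal O}(n^{-\beta})$. When $\beta=\alpha$ the exponent is exactly $\gamma=-1$, the sum is the harmonic-type partial sum $\sum_{\nu=1}^n \nu^{-1}\asymp \ln n$, and multiplying by $n^{-\alpha}$ gives ${\mathcal O}(n^{-\alpha}\ln n)$. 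When $\beta>\alpha$ we have $\gamma<-1$, the series $\sum_{\nu\ge1}\nu^{\gamma}$ converges, so the partial sum is ${\mathcal O}(1)$ and the bound is ${\mathcal O}(n^{-\alpha})$. In each regime this produces an estimate of the form $\omega_\alpha(f,\pi/n)\le C\,\psi(n)$ along the discrete sequence $t=\pi/n$.

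The remaining step, and the only genuinely nonroutine point, is to pass from the discrete estimate at the points $t=\pi/n$ to a bound for all sufficiently small $t>0$, as the statement asserts. The plan is to use the monotonicity of $\delta\mapsto\omega_\alpha(f,\delta)$, which follows immediately from its definition as a supremum over $|h|\le\delta$ in~(\ref{general_modulus}): for an arbitrary small $t$ I would choose $n=\lceil \pi/t\rceil$, so that $\pi/(n+1)<t\le \pi/n$ and hence $\omega_\alpha(f,t)\le \omega_\alpha(f,\pi/n)$, while $n\asymp 1/t$. Substituting $n\asymp 1/t$ into each of the three discrete bounds converts $n^{-\beta}$, $n^{-\alpha}\ln n$ and $n^{-\alpha}$ into $t^{\beta}$, $t^{\alpha}|\ln t|$ and $t^{\alpha}$ respectively, which is exactly the claimed trichotomy. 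I expect the main obstacle to be purely bookkeeping: controlling the implied constants uniformly across the comparison $n\asymp 1/t$ and verifying the critical $\beta=\alpha$ case where the logarithmic factor $\ln n\asymp |\ln t|$ must be tracked carefully; the underlying analytic content is entirely contained in the integral-comparison estimates for $\sum_{\nu=1}^n \nu^{\gamma}$.
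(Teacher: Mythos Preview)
Your proposal is correct and follows exactly the route the paper intends: the paper states Corollary~\ref{Corollary 2} immediately after Corollary~\ref{Corollary 21} without a separate proof, so it is meant to be read as a direct consequence of the inverse inequality~(\ref{Inverse_Inequality}) via precisely the sum estimate $\sum_{\nu=1}^n \nu^{\alpha-1-\beta}$ and the monotonicity-based passage from $t=\pi/n$ to general $t$ that you describe.
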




\section{ Constructive characteristics of the classes of functions defined by the
$\alpha$th moduli of smoothness}

In this section we give the constructive characteristics of the classes
${\mathcal S}_{\bf M}H^{\omega}_{\alpha} $ of functions for which the $\alpha$th moduli of smoothness
$\omega_\alpha(f,\delta)$ do not exceed some majorant.

Let $\omega$ be  a function defined on interval $[0,1]$. For a fixed $\alpha>0$, we set
\begin{equation} \label{omega-class}
    {\mathcal S}_{\bf M}H^{\omega}_{\alpha}=
    \Big\{f\in {\mathcal S}_{\bf M}:  \quad \omega_\alpha(f, \delta) =
    {\mathcal O}  (\omega(\delta)),\quad  \delta\to 0+\Big\}.
\end{equation}
Further, we consider the functions $\omega(\delta)$, $\delta\in [0,1]$, satisfying the following conditions 1)--4): \textbf{1)} $\omega(\delta)$ is continuous on $[0,1]$;\  \textbf{  2)} $\omega(\delta)\uparrow$;\  \textbf{  3)} $\omega(\delta)\not=0$ for any $\delta\in (0,1]$;\  \textbf{  4)}~$\omega(\delta)\to 0$ as $\delta\to 0+$; and the well-known  condition $({\mathcal B}_\alpha)$, $\alpha>0$ (see, e.g. \cite{Bari_Stechkin_1956}):
 \[
 ({\mathcal B}_\alpha): \quad\quad  {\sum_{v=1}^n v^{\alpha-1}\omega({v^{-1}}) =
{\mathcal O}  (n^\alpha \omega ( {n^{-1}}))},\quad n\to \infty.
 \]

\begin{theorem}
       \label{Theorem_3}
       Assume that $\alpha>0$  and the function $\omega$ satisfies  conditions  $1)$--\,$4)$ and $({\mathcal  B}_\alpha)$. Then, in order a function $f\in {\mathcal S}_{\bf M}$ to belong to the class ${\mathcal S}_{\bf M}H^{\omega}_{\alpha}$, it is necessary and sufficient that
       \begin{equation} \label{iff-theorem}
       E_n(f) ={\mathcal O} ( \omega ({n^{-1}} ) ).
       \end{equation}
\end{theorem}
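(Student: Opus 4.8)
The plan is to deduce the equivalence from the two estimates already proved: the inverse inequality of Corollary~\ref{Corollary 21} will give the sufficiency of (\ref{iff-theorem}), and the direct inequality of Corollary~\ref{Corollary 02} will give its necessity. Condition $({\mathcal B}_\alpha)$ enters only in the sufficiency, while the necessity hinges on a uniform (in $n$) bound for the Jackson constant $C_{n,\alpha}(\tau)$.

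\emph{Sufficiency.} Assume $E_n(f)={\mathcal O}(\omega(n^{-1}))$. Substituting this bound into (\ref{Inverse_Inequality}) gives
\[
\omega_\alpha\Big(f,\frac\pi n\Big)\le \alpha\Big(\frac{2\pi}n\Big)^\alpha\sum_{\nu=1}^n\nu^{\alpha-1}E_\nu(f)
={\mathcal O}\Big(n^{-\alpha}\sum_{\nu=1}^n\nu^{\alpha-1}\omega(\nu^{-1})\Big),
\]
and condition $({\mathcal B}_\alpha)$ replaces the last sum by ${\mathcal O}(n^\alpha\omega(n^{-1}))$, whence $\omega_\alpha(f,\pi/n)={\mathcal O}(\omega(n^{-1}))$. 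To reach an arbitrary $\delta\to0+$ I would pick $n$ with $\pi/(n+1)<\delta\le\pi/n$; then $1/n<\delta$, and the monotonicity of $\omega_\alpha(f,\cdot)$ together with property~2) of $\omega$ yields $\omega_\alpha(f,\delta)\le\omega_\alpha(f,\pi/n)={\mathcal O}(\omega(n^{-1}))\le{\mathcal O}(\omega(\delta))$, i.e.\ $f\in{\mathcal S}_{\bf M}H^\omega_\alpha$.

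\emph{Necessity.} Assume $f\in{\mathcal S}_{\bf M}H^\omega_\alpha$. I would apply Corollary~\ref{Corollary 02} with the \emph{fixed} value $\tau=1$, so that $E_n(f)\le 2C_{n,\alpha}(1)\,\omega_\alpha(f,1/n)$; choosing $\tau=1$ is what produces the scale $\omega(n^{-1})$ directly and avoids comparing $\omega(\pi/n)$ with $\omega(n^{-1})$. Since $\omega_\alpha(f,1/n)={\mathcal O}(\omega(n^{-1}))$ by the hypothesis, (\ref{iff-theorem}) will follow as soon as $\sup_n C_{n,\alpha}(1)<\infty$.

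The main obstacle is exactly this uniform boundedness. Because $C_{n,\alpha}(\tau)$ is an infimum over $v\in V(\tau)$, it suffices to test it on a single convenient $v$; I would take $v(u)=u\in V(1)$ (Lebesgue measure). With $\varphi(t)=2^\alpha|\sin(t/2)|^\alpha$ and the substitution $s=ku/(2n)$ one gets, for every $k\ge n$,
\[
\int_0^1\varphi\Big(\frac{ku}n\Big)\,du=2^\alpha\,\frac1T\int_0^T|\sin s|^\alpha\,ds,\qquad T:=\frac{k}{2n}\ge\frac12 .
\]
The map $T\mapsto T^{-1}\int_0^T|\sin s|^\alpha\,ds$ is continuous and strictly positive on $[1/2,\infty)$ and tends to $\pi^{-1}\int_0^\pi|\sin s|^\alpha\,ds>0$, so it has a positive infimum $c_\alpha$ there; since the admissible values $T=k/(2n)$ $(k\ge n)$ all lie in $[1/2,\infty)$, we obtain $I_{n,\varphi}(1,v)=\inf_{k\ge n}\int_0^1\varphi(ku/n)\,du\ge 2^\alpha c_\alpha$ for all $n$. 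As $v(1)-v(0)=1$, this forces $C_{n,\alpha}(1)\le(2^\alpha c_\alpha)^{-1}$ uniformly in $n$, which closes the argument. The delicate point is precisely this equidistribution-type lower bound, which prevents the infimum over $k\ge n$ from collapsing to $0$ — the very degeneracy that the averaging device of Theorem~\ref{Theorem_2.1} was introduced to handle.
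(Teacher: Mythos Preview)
Your proof is correct and follows exactly the paper's route: Corollary~\ref{Corollary 02} with $\tau=1$ for the necessity, and Corollary~\ref{Corollary 21} together with condition $({\mathcal B}_\alpha)$ for the sufficiency. You actually go further than the paper by explicitly verifying $\sup_n C_{n,\alpha}(1)<\infty$ (via the test weight $v(u)=u$ and the equidistribution lower bound), a point the paper simply takes for granted, and by carrying the sufficiency from the discrete scale $\pi/n$ to arbitrary $\delta\to0+$.
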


\begin{proof}
 Let $f \in {\mathcal S}_{\bf M}H^{\omega}_{\alpha}$, by virtue of Corollary \ref{Corollary 02}, we have
\begin{equation} \label{using-direct-theorem}
    E_n(f) \le 2C_{n,\alpha}(1) \omega_\alpha (f; {n^{-1}} ),
\end{equation}
Therefore, {relation (\ref{omega-class}) yields (\ref{iff-theorem})}.
On the other hand, if relation (\ref{iff-theorem}) holds, then by virtue of (\ref{Inverse_Inequality}),
taking into account the condition $({\mathcal  B}_\alpha)$, we obtain
\begin{equation} \label{using-inverse-theorem}
    \omega _\alpha(f, {n^{-1}} )\le
    \alpha \Big(\frac{2\pi }{n}\Big)^\alpha \sum _{\nu =1}^n \nu ^{\alpha-1} E_{\nu}(f)
        \le
    \frac {C }{n^\alpha} \sum _{\nu =1}^n \nu ^{\alpha-1} \omega ({v^{-1}})=
    {\mathcal O}  (\omega ( {n^{-1}})).
\end{equation}
Thus, the function  $f$ belongs to the set ${\mathcal S}_{\bf M}H^{\omega}_{\alpha}$.$\hfill\Box$

\end{proof}

The function $h(t)=t^r$, $r \le \alpha$, satisfies the condition $({\mathcal  B}_\alpha)$. Hence, denoting by ${\mathcal S}_{\bf M}H_{\alpha}^r$  the class ${\mathcal S}_{\bf M}H^{\omega}_{\alpha}$ for $\omega(t)=t^r$, $0<r\le \alpha,$  we establish the following statement:

\vskip 2mm

\begin{corollary}
      \label{Corollary_5}
      Let $\alpha >0$, $0<r\le \alpha.$
      In order  a function $f \in S{_{\scriptstyle {\bf M}}}$ to belong to ${\mathcal S}_{\bf M}H_{\alpha}^r$, it is
      necessary and sufficient that
      \[
      E_n(f)
      ={\mathcal O}   ({n^{-r}} ).
      \]
\end{corollary}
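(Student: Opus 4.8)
The plan is to derive this statement as the instance of Theorem~\ref{Theorem_3} in which the majorant is the power function $\omega(\delta)=\delta^{r}$ with $0<r\le\alpha$. Since $\omega(n^{-1})=n^{-r}$, the general characterization $(\ref{iff-theorem})$ specializes to $E_n(f)={\mathcal O}(n^{-r})$, while the class ${\mathcal S}_{\bf M}H^{\omega}_{\alpha}$ becomes exactly ${\mathcal S}_{\bf M}H_{\alpha}^{r}$. Hence the corollary will follow at once from Theorem~\ref{Theorem_3}, provided one checks that $\omega(\delta)=\delta^{r}$ fulfils the hypotheses of that theorem, namely conditions $1)$--$4)$ together with $({\mathcal B}_\alpha)$.

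First I would dispose of conditions $1)$--$4)$, which are immediate for a positive power: $\delta\mapsto\delta^{r}$ is continuous on $[0,1]$, strictly increasing, nonzero on $(0,1]$, and vanishes as $\delta\to0+$. The only genuine computation is $({\mathcal B}_\alpha)$. Substituting $\omega(v^{-1})=v^{-r}$ reduces the left-hand side to the partial sum
\[
\sum_{v=1}^{n} v^{\alpha-1}\omega(v^{-1})=\sum_{v=1}^{n} v^{\alpha-r-1},
\]
which I would estimate by comparison with $\int_{1}^{n}x^{\alpha-r-1}\,dx$. For $0<r<\alpha$ the exponent $\alpha-r-1$ exceeds $-1$, so this sum is of order $n^{\alpha-r}$, which is exactly the order $n^{\alpha}\omega(n^{-1})$ of the right-hand side, and $({\mathcal B}_\alpha)$ holds.

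With the hypotheses verified, Theorem~\ref{Theorem_3} delivers the equivalence $f\in{\mathcal S}_{\bf M}H_{\alpha}^{r}\iff E_n(f)={\mathcal O}(n^{-r})$ directly. I expect the endpoint $r=\alpha$ to be the main obstacle: there the exponent $\alpha-r-1$ equals $-1$, the partial sum $\sum_{v=1}^{n}v^{-1}$ grows like $\log n$, whereas the right-hand side $n^{\alpha}\omega(n^{-1})$ stays bounded, so the naive comparison no longer yields $({\mathcal B}_\alpha)$. This is precisely the saturation threshold already visible in the logarithmic factor appearing for $\beta=\alpha$ in Corollary~\ref{Corollary 2}, and at $r=\alpha$ the equivalence would have to be treated with additional care rather than by a direct appeal to Theorem~\ref{Theorem_3}; away from this endpoint the argument is routine.
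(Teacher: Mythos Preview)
Your approach---specialize Theorem~\ref{Theorem_3} to $\omega(\delta)=\delta^{r}$ and check $1)$--$4)$ together with $({\mathcal B}_\alpha)$---is exactly what the paper does; the paper's entire argument is the single sentence preceding the corollary, asserting that $t^{r}$ with $r\le\alpha$ satisfies $({\mathcal B}_\alpha)$ and invoking Theorem~\ref{Theorem_3}.

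Your reservation about the endpoint $r=\alpha$ is not merely a stylistic scruple: it is a genuine gap, and one that the paper itself glosses over. As you compute, for $r=\alpha$ the left-hand side of $({\mathcal B}_\alpha)$ is the harmonic sum $\sum_{v=1}^{n}v^{-1}\asymp\log n$, while the right-hand side $n^{\alpha}\omega(n^{-1})=1$ stays bounded, so $({\mathcal B}_\alpha)$ \emph{fails}. Hence Theorem~\ref{Theorem_3} does not apply at $r=\alpha$, and the paper's one-line derivation is incomplete there. This is consistent with Corollary~\ref{Corollary 2}, where the case $\beta=\alpha$ produces the extra factor $|\ln t|$; the inverse estimate (\ref{Inverse_Inequality}) yields only $\omega_\alpha(f,n^{-1})={\mathcal O}(n^{-\alpha}\log n)$ from $E_n(f)={\mathcal O}(n^{-\alpha})$. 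So your proof is correct for $0<r<\alpha$ and coincides with the paper's, and you have in fact identified an oversight in the paper's formulation at $r=\alpha$ rather than a defect in your own argument.
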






\end{document}